\newtheorem{thm}{Theorem}[section]
\newtheorem{prop}[thm]{Proposition}
\newtheorem{cor}[thm]{Corollary}
\newtheorem{lem}[thm]{Lemma}
\newtheorem{defn}[thm]{Definition}
\newtheorem{prob}[thm]{Problem}
\numberwithin{equation}{section}
\newcommand{\sn}{\mathfrak{S}_n}
\newcommand{\sr}{\mathfrak{S}_r}
\newcommand{\mfs}[1]{\mathfrak{S}_{#1}}
\newcommand{\mfsmin}[1]{\mathfrak{S}_-^{#1}}
\newcommand{\mfsrmin}[1]{\mathfrak{S}_{#1}^-}
\newcommand{\zsn}{\mathbb{Z}[\sn]}
\newcommand{\zx}{\mathbb{Z}[x]}
\newcommand{\zxn}{\mathbb{Z}[x_{1,1},\dotsc,x_{n,n}]}
\newcommand{\cqq}{\mathbb{C}[\qp12, \qm12]}
\newcommand{\zqq}{\mathbb{Z}[\qp12, \qm12]}
\newcommand{\csn}{\mathbb{C}[\sn]}
\newcommand{\eJ}{{\emptyset, J}}
\newcommand{\wo}{w_0}
\newcommand{\euv}{\epsilon_{u,v}}
\newcommand{\evw}{\epsilon_{v,w}}
\newcommand{\qp}[2]{q^{\frac{#1}{#2}}}
\newcommand{\qm}[2]{q^{\negthinspace\Bar\,\frac{#1}{#2}}}
\newcommand{\qumongous}[4]{q^{\frac{#1}2 + #2 + #3 - \frac{#4}2}}
\newcommand{\quv}{q_{u,v}}
\newcommand{\qvw}{q_{v,w}}
\newcommand{\qeu}{\smash{\qp{\ell(u)}2}}
\newcommand{\qustar}{\smash{\qp{\ell(u^*)}2}}
\newcommand{\qet}{\smash{\qp{\ell(t)}2}}
\newcommand{\qev}{\smash{\qp{\ell(v)}2}}
\newcommand{\qew}{\smash{\qp{\ell(w)}2}}
\newcommand{\qwstar}{\smash{\qp{\ell(w^*)}2}}
\newcommand{\qwm}{\smash{\qp{\ell(w_-)}2}}
\newcommand{\qwmt}{\smash{\qp{\ell(w_-t)}2}}
\newcommand{\qwmtstar}{\smash{\qp{\ell((w_-t)^*)}2}}
\newcommand{\qiuv}{q_{u,v}^{-1}}
\newcommand{\qivw}{q_{v,w}^{-1}}
\newcommand{\qiew}{\qm{\ell(w)}2}
\newcommand{\qiey}{\qm{\ell(y)}2}
\newcommand{\qiev}{q_v^{-1}}
\newcommand{\qdiff}{\qp12 - \qm12}
\newcommand{\wT}{\widetilde T} 
\newcommand{\wTp}{\widetilde T'}
\newcommand{\ol}[1]{\overline{#1}}
\newcommand{\wR}[1]{\widetilde R_{#1}}
\newcommand{\wS}[1]{\widetilde S_{#1}}
\newcommand{\hnq}{H_n(q)}
\newcommand{\A}{\mathcal{A}}
\newcommand{\anq}{\mathcal{A}(n,q)}
\newcommand{\arnq}{\mathcal{A}_r}
\newcommand{\Ann}{\mathcal{A}_{[n],[n]}}
\newcommand{\annnq}{\mathcal{A}_{[n],[n]}}
\newcommand{\anmnq}{\mathcal{A}_{[n],M}(n;q)}
\newcommand{\AnM}{\mathcal{A}_{[n],M}}
\newcommand{\ALM}{\mathcal{A}_{L,M}}
\newcommand{\Hpej}{H'_{\smash\eJ}} 
\newcommand{\Hej}{H_\eJ}
\newcommand{\Wejp}{W^{\emptyset,J}_+}
\newcommand{\Wijp}{W^{I,J}_+}
\newcommand{\Wijm}{W^{I,J}_-}
\newcommand{\pej}[1]{p_{#1}^{\emptyset,J}}
\newcommand{\rej}[1]{r_{#1}^{\emptyset,J}}
\newcommand{\redexp}[2]{s_{i_{#1}} \ntnsp \cdots s_{i_{#2}}}
\newcommand{\wtc}[2]{\widetilde{C}_{#1}(#2)}
\newcommand{\imm}[1]{\mathrm{Imm}_{#1}}
\newcommand{\sumsb}[1]{\sum_{\substack{#1}}}  
\newcommand{\inv}{\textsc{inv}}
\newcommand{\rinv}{\textsc{rinv}}
\newcommand{\defeq}{:=} 
\newcommand{\dfct}{\textsc{d}}
\newcommand{\dnc}{\textsc{dnc}}
\newcommand{\dc}{\textsc{dc}}
\newcommand{\spn}{\mathrm{span}}
\newcommand{\sgn}{\mathrm{sgn}}
\newcommand{\wgt}{\mathrm{wgt}}
\newcommand{\type}{\mathrm{type}}
\newcommand{\incross}{\textsc{invnc}}
\newcommand{\cdncross}{\textsc{cdnc}}
\newcommand{\cross}{\textsc{cr}}
\newcommand{\pavoiding}{$3412$-avoiding, $4231$-avoiding }
\newcommand{\avoidsp}{avoids the patterns $3412$ and $4231${}}
\newcommand{\avoidp}{avoid the patterns $3412$ and $4231${}}
\newcommand{\ul}[1]{\underline{#1}}  
\newcommand{\ssec}[1]{\subsection{#1}{$\negthinspace$}}
\newcommand{\tr}{{\negthickspace \top \negthickspace}}
\newcommand{\ntnsp}{\negthinspace}
\newcommand{\ntksp}{\negthickspace}
\newcommand{\nTksp}{\negthickspace\negthickspace}
\newcommand{\oqglnc}{{\mathcal O}_q(GL_n (\mathbb C))}
\newcommand{\oqslnc}{{\mathcal O}_q(SL_n (\mathbb C))}
\newcommand{\qdet}{\mathrm{det}_q}
\newcommand{\permmon}[2]{#1_{1,#2_1} \ntnsp\cdots {#1}_{n,#2_n}}
\newcommand{\sprod}[2]{s_{#1_1} \ntnsp\cdots s_{#1_#2}}
\newcommand{\ssm}{\smallsetminus}
\newcommand{\bs}{\backslash}  
\newcommand{\wleq}{\leq_W}
\newcommand{\wless}{<_W}  
\newcommand{\slambda}{\mathfrak{S}_\lambda}
\newcommand{\slambdamin}{\mathfrak{S}_\lambda^{-}}
\def\hhhpsp{\def\baselinestretch{0.15}\large\normalsize}
\def\ssp{\def\baselinestretch{1.0}\large\normalsize}
\begin{document}
\author{Adam Clearwater}
\author{Mark Skandera}
\title[Total nonnegativity and $\hnq$ characters]
      {Total nonnegativity and induced sign characters of the Hecke algebra}

\bibliographystyle{../dart}

\date{\today}

\begin{abstract}
  Let $\mfs{[i,j]}$ be the subgroup of the symmetric group $\sn$ generated by
  adjacent transpositions $(i,i+1), \dotsc, (j-1,j)$, assuming
  $1 \leq i < j \leq n$.
  We give a combinatorial rule for
  evaluating induced sign characters of the type $A$ Hecke algebra $\hnq$
  at all elements of the form $\sum_{w \in \mfs{[i,j]}} T_w$ and at all products
  of such elements. This includes evaluation at some elements $C'_w(q)$
  of the Kazhdan-Lusztig basis.
  
\end{abstract}

\maketitle

\section{Introduction}\label{s:intro}
Concepts of total nonnegativity, first explored by
Gantmacher and Krein~\cite{GantKreinOsc}
have found their way into many areas of mathematics,
including the study of polynomials $p(x_{1,1},\dotsc,x_{n,n})$
satisfying $p(a_{1,1}, \dotsc, a_{n,n}) \geq 0$ for every totally nonnegative
$n \times n$ matrix $A = (a_{i,j})$.
We call these {\em totally nonnegative (TNN)} polynomials.
In particular, work of Lusztig~\cite{LusztigTP} implies that
in $\zx \defeq \zxn$,
certain elements
which are related to the dual canonical basis of
the quantum group $\oqslnc$ are TNN polynomials.

In practice, it is sometimes possible to use cluster algebras~\cite{FominTPTest}
and a computer to demonstrate that a polynomial is TNN by expressing it
as a subtraction-free rational expression in matrix minors.
On the other hand, no simple characterization of TNN polynomials is known.
To improve our understanding of TNN polynomials,
one might begin by investigating the {\em immanant subspace}
$\spn_{\mathbb Z} \{ \permmon xw \,|\, w \in \sn \}$ of $\zx$,
especially the generating functions
\begin{equation}\label{eq:imm}
  \imm{\theta}(x) \defeq \sum_{w \in \sn} \theta(w) \permmon xw
\end{equation}
for class functions $\theta: \sn \rightarrow \mathbb Z$.
Or, since some published results relate total nonnegativity to
the Hecke algebra $\hnq$ and its traces $\theta_q: \hnq \rightarrow \zqq$,
one might investigate these.


In particular, let $\{ \wtc wq \,|\, w \in \sn \}$ be the (modified, signless)
{\em Kazhdan-Lusztig basis} of $\hnq$, defined by
\begin{equation}\label{eq:klbasis}
\wtc wq \defeq \qp{\ell(w)}2  C'_w(q) = \sum_{v \leq w} P_{v,w}(q) T_v,
\end{equation}
where $\{ T_w \,|\, w \in \sn \}$ is the natural basis of $\hnq$,
$\{ P_{v,w}(q) \,|\, v,w \in \sn \}$ are the
Kazhdan-Lusztig polynomials~\cite{KLRepCH}, and $\leq$ denotes the Bruhat order.
(See,\,e.g., \cite[\S 2]{BBCoxeter}.)
Specializing at $\qp12 = 1$ we have $T_v \mapsto v$ and $H_n(1) \cong \zsn$.
For $1 \leq a < b \leq n$, let $s_{[a,b]} \in \sn$ be the {\em reversal}
whose one-line notation is
$1 \cdots (a-1) b (b-1) \cdots (a+1) a (a+2) \cdots n$.
Stembridge~\cite[Thm.\,2.1]{StemImm} showed that
for any linear function $\theta: \zsn \rightarrow \mathbb Z$, the immanant
$\imm{\theta}(x)$ is TNN if for all sequences $J_1, \dotsc, J_m$ of
subintervals of $[n] \defeq \{1, \dotsc, n\}$, we have
\begin{equation}\label{eq:cprod1}
  \theta(\wtc{s_{J_1}\ntnsp}1 \cdots \wtc{s_{J_m}\ntnsp}1) \geq 0.
\end{equation}
Furthermore, by Lindstr\"om's Lemma~\cite{LinVRep}
and its converse~\cite{BrentiCTP}, \cite{CryerProp},
a combinatorial interpretation of the above
expression would immediately yield a combinatorial interpretation of
the number $\imm{\theta}(A)$ for $A$ a TNN matrix.
Now if a linear function
$\theta_q: \hnq \rightarrow \zqq$ specializes at $\qp12 = 1$ to $\theta$,
then (\ref{eq:cprod1}) is clearly a consequence of the condition
\begin{equation}\label{eq:cprodq}
  \theta_q(\wtc{s_{J_1}\ntnsp}q \cdots \wtc{s_{J_m}\ntnsp}q) \in \mathbb N[q],
\end{equation}
and a combinatorial interpretation of coefficients of the above polynomial
would yield combinatorial interpretations of the earlier expressions.
Haiman~\cite[Appendix]{HaimanHecke} observed that (\ref{eq:cprodq}) in turn
follows from the condition that for all $w \in \sn$ we have
\begin{equation}\label{eq:cnoprodq}
  \theta_q(\wtc wq) \in \mathbb N[q],
\end{equation}
since results in \cite{BBDFaisceaux}, \cite{SpringerQACI}
imply that any product of Kazhdan-Lusztig basis
elements belongs to $\spn_{\mathbb N[q]}\{ \wtc wq \,|\, w \in \sn \}$.
On the other hand, it is not clear that the coefficients in the resulting
linear combination would lead to a combinatorial interpretation of
the expression in (\ref{eq:cprodq}).

Stembridge~\cite[Cor.\,3.3]{StemImm}
proved that for each irreducible $\sn$-character
$\chi^\lambda$, the evaluation (\ref{eq:cprod1}) belongs to $\mathbb N$.
Haiman~\cite[Lem.\,1.1]{HaimanHecke}
proved that for each corresponding irreducible
$\hnq$-character $\chi_q^\lambda$, the evaluation (\ref{eq:cnoprodq})
belongs to $\mathbb N[q]$.
They conjectured the same~\cite[Conj.\,2.1]{StemConj},
\cite[Conj.\,2.1]{HaimanHecke}
for functions $\phi^\lambda$ ($\phi_q^\lambda$),
called {\em monomial traces}, related to irreducible characters by
the inverse Kostka numbers.
Both sets
$\{ \chi^\lambda \,|\, \lambda \vdash n \}$,
$\{ \phi^\lambda \,|\, \lambda \vdash n \}$
($\{ \chi_q^\lambda \,|\, \lambda \vdash n \}$,
$\{ \phi_q^\lambda \,|\, \lambda \vdash n \}$),
where $\lambda \vdash n$ denotes that $\lambda$ varies over all partitions
of $n$,
form bases of the space of $\sn$-class functions ($\hnq$-trace space).
(See, e.g., \cite[\S A3.9]{BBCoxeter}, \cite[\S 1]{StemConj}.)

None of these results or conjectures
included a combinatorial interpretation.
For the purposes of understanding TNN polynomials of the form (\ref{eq:imm}),
it would be desirable to solve the following problem.
\begin{prob}\label{prob:interp}
Give combinatorial interpretations of
all of the expressions in
(\ref{eq:cprodq})
when $\theta_q$ varies over all elements of any basis of the
$\hnq$-trace space.
\end{prob}
So far, only some special cases have such interpretations.
In the case that $w$ \avoidsp, the Kazhdan-Lusztig basis element
$\wtc wq$ is closely
related to a product of the form appearing in (\ref{eq:cprodq}).
Combinatorial interpretations of the corresponding expressions
(\ref{eq:cprodq}) and (\ref{eq:cnoprodq})
were given in \cite{CHSSkanEKL} for
$\theta_q \in \{\chi_q^\lambda \,|\, \lambda \vdash n\}$,
and for $\theta_q$ belonging to several other bases of the $\hnq$-trace space,
including the basis $\{ \epsilon_q^\lambda \,|\, \lambda \vdash n \}$
of induced sign characters.
Combinatorial interpretations for $\theta_q = \phi_q^\lambda$
were given only when
$\lambda$ has at most two parts~\cite[Thm.\,10.3]{CHSSkanEKL},
or when $\lambda$ has rectangular shape
and $q=1$~\cite[Thm.\,2.8]{StemConj}.

In the case that all reversals
$s_{J_1},\dotsc,s_{J_m}$
in (\ref{eq:cprodq})
are adjacent transpositions,
combinatorial interpretations of
\begin{equation*}
  \epsilon_q^\lambda(\wtc{s_{[j_1,j_1+1]}\ntnsp}q \cdots \wtc{s_{[j_m,j_m+1]}\ntnsp}q)
\end{equation*}
for all $\lambda \vdash n$ were given in \cite[Thm.\,5.4]{KLSBasesQMBIndSgn},
nearly solving Problem~\ref{prob:interp}.
To complete the solution of Problem~\ref{prob:interp},
we generalize \cite[Thm.\,5.4]{KLSBasesQMBIndSgn}
to interpret all evaluations of the form
\begin{equation}\label{eq:cprodegen}
  \epsilon_q^\lambda(\wtc{s_{J_1}\ntnsp}q \cdots \wtc{s_{J_m}\ntnsp}q),
\end{equation}
where the intervals $J_1, \dotsc, J_m \subseteq [n]$
have arbitrary cardinality. 
The justification of our solution closely follows the steps used in
\cite[\S 3-5]{KLSBasesQMBIndSgn},
with definitions and propositions generalized as necessary.

In Section~\ref{s:hnqanq}, we discuss the symmetric group algebra $\zsn$,
the Hecke algebra $\hnq$, and a noncommutative $q$-analog $\anq$ of $\zx$
known as the quantum matrix bialgebra.
The $q$-analogs $\imm{\theta_q}(x)$ of immanants (\ref{eq:imm})
belong to a $q$-analog $\annnq \subset \A$ of the immanant subspace of $\zx$,
and $\imm{\epsilon_q^\lambda}(x)$ has a natural expression
in terms of certain monomials which we write as
$\{x^{u,w} \,|\, u, w \in \sn \}$.
In Section~\ref{s:snets}, we consider
elements of the form
$\smash{\wtc{s_{J_1}\ntnsp}q \cdots \wtc{s_{J_m}\ntnsp}q} \in \hnq$,
and we associate to each a planar network $G$,
a related matrix $B$, and a linear map $\sigma_B: \Ann \rightarrow \zqq$.
In Section~\ref{s:gtabx},
we interpret $\sigma_B(x^{u,w}) \in \mathbb N[q]$ for each pair $u,w \in \sn$
in terms of statistics on families of source-to-sink paths in $G$
and on tableaux filled with such paths.
In Section~\ref{s:qimmevalthm}, we show that
for each $\hnq$-trace $\theta_q$ and its generating function
$\imm{\theta_q}(x)$, we have the identity
\begin{equation}\label{eq:evalwithsigma}
\theta_q(\wtc{s_{J_1}\ntnsp}q \cdots \wtc{s_{J_m}\ntnsp}q)
 = \sigma_B(\imm{\theta_q}(x)).
\end{equation}
This identity leads to our main result in Section~\ref{s:indsgneval},
where we use path families, tableaux, and
$\imm{\epsilon_q^\lambda}(x)$
to combinatorially evaluate the expressions (\ref{eq:cprodegen}). 


\section{Algebraic background}\label{s:hnqanq}

Define the {\em symmetric group algebra} $\zsn$
and the {\em (type $A$ Iwahori-) Hecke algebra} $\hnq$
to be the algebras with
multiplicative identity elements $e$ and $T_e$,
respectively,
generated over $\mathbb Z$ and $\zqq$
by elements
$s_1,\dotsc, s_{n-1}$ and 
$T_{s_1},\dotsc, T_{s_{n-1}}$, 
subject to the relations
\begin{equation}\label{eq:hnqdef}
\begin{alignedat}{3}
s_i^2 &= e &\qquad
T_{s_i}^2 &= (q-1) T_{s_i} + qT_e &\qquad
&\text{for $i = 1, \dotsc, n-1$},\\
s_is_js_i &= s_js_is_j &\qquad
T_{s_i}T_{s_j}T_{s_i} &= T_{s_j}T_{s_i}T_{s_j} &\qquad
&\text{for $|i - j| = 1$},\\
s_is_j &= s_js_i &\qquad
T_{s_i}T_{s_j} &= T_{s_j}T_{s_i} &\qquad
&\text{for $|i - j| \geq 2$}.
\end{alignedat}
\end{equation}
Analogous to the natural basis $\{ w \,|\, w \in \sn \}$ of $\zsn$
is the natural basis $\{ T_w \,|\, w \in \sn \}$ of $\hnq$,
where we define
$T_w = T_{s_{i_1}} \ntksp \cdots T_{s_{i_\ell}}$
whenever $\sprod i\ell$
is a reduced (short as possible)
expression for $w$ in $\sn$.  We call $\ell$ the {\em length}
of $w$ and write $\ell = \ell(w)$.
We define the {\em one-line notation} $w_1 \cdots w_n$ of $w \in \sn$ by
letting any expression for $w$ act on the word $1 \cdots n$,
where each generator $s_j = s_{[j,j+1]}$ acts on an $n$-letter word by
swapping the letters in positions $j$ and $j+1$, i.e.,
$s_j \circ v_1 \cdots v_n = v_1 \cdots v_{j-1} v_{j+1} v_j v_{j+2} \cdots v_n$.
$\ell(w)$ is equal
to $\inv(w)$, the number of inversions 
in the one-line notation $w_1 \cdots w_n$ of $w$.
If this one-line notation contains no subword matching the pattern
$3412$ or $4231$ (e.g., if $w$ is a reversal),
then the Kazhdan-Lusztig polynomials in (\ref{eq:klbasis})
are identically $1$~\cite{LakSan}.

For any subinterval $J = [a,b]$ of $[n] \defeq \{1, \dotsc, n\}$,
define $\mfs J$ to be the subgroup of $\sn$ generated by
$\{s_a, \dotsc, s_{b-1}\}$.
The longest element of $\mfs J \cong \mfs{b-a+1}$
is the reversal $s_J \defeq s_{[a,b]}$.
Define $\mfsmin J$
to be the set of minimum length representatives
of left cosets
$w \mfs J$,
i.e., the elements $w \in \sn$ satisfying
$ws_i > w$ (in the Bruhat order)
for all $i \in [a,b-1]$.
A bijection of $\sn$ with $\mfsmin J \times \mfs J$ is given by
the fact that
each element $w \in \sn$ factors uniquely as
\begin{equation}\label{eq:leftcosetfactor}
 w = w_- w^*,
\end{equation}
with
$(w_-,w^*) \in \mfsmin J \times \mfs J$
and $\ell(w_-) + \ell(w^*) = \ell(w)$.
This factorization satisfies the identity
\begin{equation}\label{eq:tcprod}
  T_w \wtc{s_J \ntnsp}q = T_{w_-} T_{w^*} \wtc{s_J \ntnsp}q
  = q^{\ell(w^*)} T_{w_-} \wtc{s_J \ntnsp}q.
\end{equation}
A similar factorization result employs
the set $\mfsrmin J$
of minimum length representatives of right cosets $\mfs J w$,
and defines a bijection from $\sn$ to $\mfs J \times \mfsrmin J$.

For any partition $\lambda = (\lambda_1, \dotsc, \lambda_r) \vdash n$,
the subgroup
\begin{equation*}
  \mfs{\lambda} \defeq
  \mfs{[1,\lambda_1]} \times \mfs{[\lambda_1 + 1,\lambda_1 + \lambda_2]}
  \times \cdots \times \mfs{[n-\lambda_r+1,n]} \cong
  \mfs{\lambda_1} \times \mfs{\lambda_2} \times \cdots \times \mfs{\lambda_r}
\end{equation*}
of $\sn$ generated by
\begin{equation}
\label{eq:youngsubgen}
\{ s_1, \dotsc, s_{n-1} \} \ssm 
\{ s_{\lambda_1}, s_{\lambda_1 + \lambda_2}, 
s_{\lambda_1 + \lambda_2 + \lambda_3}, 
\dotsc, 
s_{n-\lambda_r} \}
\end{equation}
is called the {\em Young subgroup} indexed by $\lambda$.
Define $H_\lambda(q)$ to be the subalgebra of $\hnq$ generated by the
corresponding elements $T_{s_i}$.
Define $\slambdamin$ to be the set of minimum length representatives
of right cosets
$\mfs \lambda u$,
i.e., the elements $u \in \sn$ satisfying
$s_iu > u$ (in the Bruhat order)
for all $s_i$ in the set (\ref{eq:youngsubgen}).
Equivalently, $\slambdamin$ consists of all $u \in \sn$ whose one-line
notations are concatenations of $r$ increasing subwords
\begin{equation}\label{eq:subwordsofu}
u_1 \cdots u_{\lambda_1}, \quad 
u_{\lambda_1 +1} \cdots u_{\lambda_1 + \lambda_2}, \quad
u_{\lambda_1 + \lambda_2 + 1} \cdots u_{\lambda_1 + \lambda_2 + \lambda_3}, \quad
\dotsc, \quad
u_{n-\lambda_r + 1} \cdots u_n.
\end{equation}
The bijections of $\sn$ with $\mfs J \times \mfsrmin J$,
for $J = [1,\lambda_1], [\lambda_1 + 1, \lambda_1 + \lambda_2], \dotsc,
[n-\lambda_r+1,n]$,
extend to a bijection
of $\sn$ with $\slambda \times \slambdamin$ and a unique factorization
of each element $w \in \sn$ as 
\begin{equation}\label{eq:rightcosetfactor}
  w = w^\circ w^-
\end{equation}
with $(w^\circ, w^-) \in \slambda \times \slambdamin$
and $\ell(w^\circ) + \ell(w^-) = \ell(w)$.
  


We call a linear function $\theta_q: H_n(q) \rightarrow \zqq$ 
an $\hnq$-{\em trace}
if it satisfies $\theta_q(gh) = \theta_q(hg)$ 
for all $g, h \in H_n(q)$.  
Such functions form a free $\zqq$-module of rank equal to the number of
partitions of $n$, and include the characters of all finite representations 
of $H_n(q)$.  The specialization at $\qp12 = 1$ of a trace is an
$\sn$-class function.
Two bases of the module of $\hnq$-trace functions are the set of
irreducible characters
$\{ \chi_q^\lambda \,|\, \lambda \vdash n \}$
and the set of induced sign characters
$\{ \epsilon_q^\lambda \,|\, \lambda \vdash n \}$,
where
\begin{equation*}
  \epsilon_q^\lambda = \sgn \ntnsp \uparrow_{H_\lambda(q)}^{\hnq},
  \qquad
  \sgn (T_{s_i}) = -1.
\end{equation*}

Define the {\em quantum matrix bialgebra} $\A = \anq$
to be to be the associative algebra with unit $1$ generated over $\zqq$
by $n^2$ variables $x=(x_{1,1},\dots,x_{n,n})$,
subject to the relations
\begin{equation}\label{eq:anqdef}
\begin{alignedat}{2}
x_{i,\ell}x_{i,k} &= \qp12x_{i,k} x_{i,\ell}, &\qquad 
x_{j,k} x_{i,\ell} &= x_{i,\ell}x_{j,k},\\
x_{j,k}x_{i,k} &= \qp12x_{i,k} x_{j,k}, &\qquad 
x_{j,\ell} x_{i,k} &= x_{i,k} x_{j,\ell} + (\qdiff) x_{i,\ell}x_{j,k},
\end{alignedat}
\end{equation}
for all indices $1 \leq i < j \leq n$ and 
$1 \leq k < \ell \leq n$.
(See \cite{ManinQNoncommG}.)
The counit map $\varepsilon(x_{i,j}) = \delta_{i,j}$, 
and coproduct map
$\Delta(x_{i,j}) = \sum_{k=1}^n x_{i,k} \otimes x_{k,j}$
give $\A$ a bialgebra structure.  
While not a Hopf algebra, 
$\A$ is closely related to 
the quantum group
$\oqslnc \cong \mathbb C \otimes \A/(\qdet(x)-1)$,
where
\begin{equation}\label{eq:qdetdef}
\qdet(x) \defeq \sum_{v \in \sn} (-\qm 12)^{\ell(v)} \permmon xv
\end{equation}
is the ($n \times n$) {\em quantum determinant} of the matrix $x = (x_{i,j})$.  
The antipode map of this Hopf algebra is 
$\mathcal S (x_{i,j}) = (-\qp12)^{j-i} \qdet(x_{[n]\ssm\{j\}, [n]\ssm \{i\}})$,
where
$x_{L,M} \defeq (x_{\ell,m})_{\ell \in L, m \in M}$,
and 
$\qdet(x_{L,M})$ is defined analogously to (\ref{eq:qdetdef}),
assuming $|L| = |M|$.
Specializing $\A$ at $\qp12 = 1$, we obtain 
the commutative 
ring $\zx$.

As a $\zqq$-module, $\A$ has a natural basis
$\{ x_{1,1}^{a_{1,1}} \cdots x_{n,n}^{a_{n,n}} \,|\, 
a_{1,1}, \dotsc, a_{n,n} \in \mathbb{N} \}$
of monomials in which variables 
appear in lexicographic order, and 
the relations (\ref{eq:anqdef})
provide an algorithm for expressing any other
monomial in terms of this basis.
The submodule $\annnq$
spanned by the monomials
$\{ x^{u,w} \defeq x_{u_1,w_1} \cdots x_{u_n,w_n} \mid u,w \in \sn \}$
has rank $n!$ and natural basis $\{ x^{e,w} \,|\, w \in \sn \}$.
Alternatively, for any fixed $u$, the set $\{ x^{u,w} \,|\, w \in \sn \}$
is a basis as well.

For any linear function $\theta_q: \hnq \rightarrow \zqq$, we may define
the generating function
\begin{equation}\label{eq:qimm}
  \imm{\theta_q}(x) \defeq \sum_{w \in \sn} \qiew \theta_q(T_w) \permmon xw
\end{equation}
in $\annnq$ which is a $q$-analog of the generating function (\ref{eq:imm}).
For
example, the generating function for the induced
sign character $\epsilon_q^\lambda$ is
\begin{equation}\label{eq:immepsilon}
  \imm{\epsilon_q^\lambda}(x) =
  \sum_I \qdet(x_{I_1,I_1}) \cdots \qdet(x_{I_r,I_r}),
\end{equation}
where
the sum is over all ordered set partitions
$I = (I_1,\dotsc,I_r)$ of $[n]$
having {\em type} $\lambda$, i.e.,
satisfying $|I_j| = \lambda_j$ \cite[Thm.\,5.4]{KSkanQGJ}.
Fixing an ordered set partition $I$ and expanding the corresponding term on
the right-hand side of (\ref{eq:immepsilon}),
we obtain
\begin{equation}\label{eq:lambepsilon}
\qdet(x_{I_1,I_1}) \cdots \qdet(x_{I_r,I_r})
  = \sum_{y \in \slambda}
(-1)^{\ell(y)} \qm{\ell(y)}2 x^{u,yu},
\end{equation}
where $u = u(I)$ is the element of $\slambdamin$
whose $r$ increasing subwords (\ref{eq:subwordsofu})
are the increasing rearrangements
of the blocks $I_1, \dotsc, I_r$ of $I$.
As $I$ varies over all ordered set partitions of type $\lambda$,
$u$ varies over all elements of $\slambdamin$ and we have
\begin{equation}\label{eq:immuslambdamin}
\imm{\epsilon_q^\lambda}(x) =
  \sum_{u \in \slambdamin} \sum_{y \in \slambda}
(-1)^{\ell(y)} \qm{\ell(y)}2 x^{u,yu}.
\end{equation}
A special case of the induced sign character immanant (\ref{eq:immepsilon})
is the generating function for the Hecke algebra sign character
$\epsilon_q^n = \chi_q^{1^n}$: the quantum determinant (\ref{eq:qdetdef}).



\section{Star networks and the evaluation map}\label{s:snets}

To each element of the form
$\smash{\wtc{s_{J_1}\ntnsp}q \cdots \wtc{s_{J_m}\ntnsp}q} \in \hnq$,
we associate a planar network $G$,
a related matrix $B$,
and a linear map $\sigma_B: \Ann \rightarrow \zqq$.
These three objects play crucial roles in the justification
of the identity (\ref{eq:evalwithsigma}) in Theorem~\ref{t:qstem},
which in turn is necessary for the justification of our main result
in Theorem~\ref{t:qepsilon}.

These three definitions are straightforward generalizations
of those given in \cite[\S 3.1--3.2]{KLSBasesQMBIndSgn}
for elements of the form
$\smash{\wtc{s_{i_1}\ntnsp}q \cdots \wtc{s_{i_m}\ntnsp}q} \in \hnq$,
i.e., where $|J_1| = \cdots = |J_m| = 2$.
Other predecessors of the
definitions, appearing in
\cite[\S 3]{CHSSkanEKL},
\cite[\S 5]{SkanNNDCB},
pertained to sequences $(J_1, \dotsc, J_m)$
of intervals satisfying the strict ``zig-zag'' conditions
stated in
\cite[\S 3]{CHSSkanEKL},
\cite[\S 3]{SkanNNDCB}.

\ssec{The network $G$ associated to $\wtc{s_{J_1}\ntnsp}q \cdots \wtc{s_{J_m}\ntnsp}q$}\label{ss:g}

To the Kazhdan-Lusztig basis element $\wtc{s_{[a,b]}\ntnsp}q$,
we associate a (simple, directed, planar) graph $G_{[a,b]}$
on $2n+1$ vertices.
For $1 \leq a < b \leq n$,
define $G_{[a,b]}$ as follows.
\begin{enumerate}
\item On the left is a column of $n$ vertices labeled
  {\em source $1$} $, \dotsc, $ {\em source $n$}, from bottom to top.
\item On the right is a column of $n$ vertices, labeled
  {\em sink $1$} $, \dotsc, $ {\em sink $n$}, from bottom to top.
\item An interior vertex is placed between the sources and sinks.
\item For $i = 1, \dotsc, a-1$ and $i = b+1, \dotsc, n$, a directed edge
  begins at source $i$ and terminates at sink $i$.
\item For $i = a, \dotsc, b$, a directed edge begins at source $i$ and
  terminates at the interior vertex, and another directed edge begins
  at the interior vertex and terminates at sink $i$.
\end{enumerate}
For $a = 1,\dotsc, n$ we define $G_{[a,a]}$
to be the similar directed planar graph on $n$ sources and
$n$ sinks with $n$ edges, each from source $i$ to sink $i$,
for $i = 1, \dotsc, n$.
Call each of the above graphs a {\em simple star network}.
In figures we will not explicitly draw vertices or show edge orientations
(assumed to be from left to right).
For $n = 4$, there are seven simple star networks:
$G_{[1,4]}$, 
$G_{[2,4]}$, 
$G_{[1,3]}$, 
$G_{[3,4]}$, 
$G_{[2,3]}$, 
$G_{[1,2]}$, 
$G_{[1,1]} = \cdots = G_{[4,4]}$, respectively,
\begin{equation}\label{eq:simplestarnets}
\begin{tikzpicture}[scale=.4,baseline=-20]
\draw[-] (0,0) -- (1,-3);
\draw[-] (0,-1) -- (1,-2);
\draw[-] (0,-2) -- (1,-1);
\draw[-] (0,-3) -- (1,0);
\end{tikzpicture}
\;, \quad
\begin{tikzpicture}[scale=.4,baseline=-20]
\draw[-] (0,0) -- (1,-2);
\draw[-] (0,-1) -- (1,-1);
\draw[-] (0,-2) -- (1,0);
\draw[-] (0,-3) -- (1,-3);
\end{tikzpicture}
\;, \quad
\begin{tikzpicture}[scale=.4,baseline=-20]
\draw[-] (0,0) -- (1,0);
\draw[-] (0,-1) -- (1,-3);
\draw[-] (0,-2) -- (1,-2);
\draw[-] (0,-3) -- (1,-1);
\end{tikzpicture}
\;, \quad
\begin{tikzpicture}[scale=.4,baseline=-20]
\draw[-] (0,0) -- (1,-1);
\draw[-] (0,-1) -- (1,0);
\draw[-] (0,-2) -- (1,-2);
\draw[-] (0,-3) -- (1,-3);
\end{tikzpicture}
\;, \quad
\begin{tikzpicture}[scale=.4,baseline=-20]
\draw[-] (0,0) -- (1,0);
\draw[-] (0,-1) -- (1,-2);
\draw[-] (0,-2) -- (1,-1);
\draw[-] (0,-3) -- (1,-3);
\end{tikzpicture}
\;, \quad
\begin{tikzpicture}[scale=.4,baseline=-20]
\draw[-] (0,0) -- (1,0);
\draw[-] (0,-1) -- (1,-1);
\draw[-] (0,-2) -- (1,-3);
\draw[-] (0,-3) -- (1,-2);
\end{tikzpicture}
\;, \quad
\begin{tikzpicture}[scale=.4,baseline=-20]
\draw[-] (0,0) -- (1,0);
\draw[-] (0,-1) -- (1,-1);
\draw[-] (0,-2) -- (1,-2);
\draw[-] (0,-3) -- (1,-3);
\end{tikzpicture}
\;. 
\end{equation}


Define a {\em star network} to be the concatenation of finitely many
simple star networks.
We write
$G \circ H$ for the network in which
sink $i$ of $G$ is identified with source $i$ of $H$, for $i = 1,\dotsc, n$.
For example, when $n = 4$ we have
\begin{equation}\label{eq:starnetsexample}
G_{[1,2]} \circ G_{[2,4]}\circ G_{[1,2]}= 
\begin{tikzpicture}[scale=.4,baseline=-20]
\draw[-] (0,0) -- (1,0) -- (2,-2) -- (3,-3);
\draw[-] (0,-1) -- (3,-1);
\draw[-] (0,-2) -- (1,-3) -- (2,-3) -- (3,-2);
\draw[-] (0,-3) -- (1,-2) -- (2,0) -- (3,0);
\end{tikzpicture}\;, \qquad
 G_{[2,4]}\circ G_{[1,3]}= 
\begin{tikzpicture}[scale=.4,baseline=-20]
\draw[-] (0,0) -- (1,-2) -- (2,-2);
\draw[-] (0,-1) -- (1,-1) -- (2,-3);
\draw[-] (0,-2) -- (1,0) -- (2,0);
\draw[-] (0,-3) -- (1,-3) -- (2,-1);
\end{tikzpicture}\;.
\end{equation}

Let $\pi = (\pi_1,\dotsc,\pi_n)$ be a sequence of source-to-sink paths
in a
wiring diagram $G$.  We call $\pi$
a {\em path family} if there exists a permutation
$w = w_1 \cdots w_n \in \sn$ such that 
$\pi_i$ is a path from source $i$ to sink $w_i$.
In this case, we say more specifically that $\pi$ has {\em type $w$}.
We say that the path family {\em covers} $G$ if it contains every edge
exactly once.

The simple star network $G_{[a,b]}$ represents the Kazhdan-Lusztig basis element
$\wtc{s_{[a,b]}\ntnsp}q$ in the sense that we have
\begin{equation*}
  \wtc{s_{[a,b]}\ntnsp}q = \sum_\pi T_{\mathrm{type}(\pi)},
\end{equation*}
where the sum is over all path families which cover $G_{[a,b]}$.
The concatenation $G_{J_1} \circ \cdots \circ G_{J_m}$ represents the product
$\wtc{s_{J_1}\ntnsp}q \cdots \wtc{s_{J_m}\ntnsp}q$ in a sense which we will
make more precise in Corollary~\ref{c:sigmastats}.

\ssec{The matrix $B$ associated to $\wtc{s_{J_1}\ntnsp}q \cdots \wtc{s_{J_m}\ntnsp}q$}\label{ss:zg}

One can enhance any planar network by
associating to each edge
a {\em weight} belonging to some ring $R$, and by defining
the {\em weight of a path} to be the product of its edge weights.
If $R$ is noncommutative,
then one multiplies weights
in the order that the corresponding edges appear in the path.
For a {\em family} $\pi = (\pi_1,\dotsc,\pi_n)$ of $n$
paths in a planar network,
one defines
$\wgt(\pi) = \wgt(\pi_1) \cdots \wgt(\pi_n)$. 
The {\em (weighted) path matrix} $B = B(G) = (b_{i,j})$ of $G$ is defined by
letting $b_{i,j}$ be the sum of weights of all paths in $G$ from source $i$
to sink $j$.
Thus the product $\permmon bw$ is equal to the sum of weights of all
path families of type $w$ in $G$ (covering $G$ or not).
It is easy to show that path matrices respect concatenation:
$B(G_1 \circ G_2) = B(G_1)B(G_2)$.
When $R$ is commutative, a result known as
{\em Lindstr\"om's Lemma}~\cite{KMG}, \cite{LinVRep}
asserts that for row and column sets $I$, $J$ with $|I| = |J|$,
the minor $\det(B_{I,J})$ is equal to the sum of weights of all nonintersecting
path families from sources indexed by $I$ to sinks indexed by $J$.

Assigning weights to the edges of $G = G_{J_1} \circ \cdots \circ G_{J_m}$
can aid in
the evaluation of a linear function $\theta_q: \hnq \rightarrow \zqq$ at
$\wtc{s_{J_1}\ntnsp}q \cdots \wtc{s_{J_m}\ntnsp}q$
by relating this evaluation to the generating function (\ref{eq:qimm}).
In particular, let
$\{ z_{h,p,k} \,|\, 1 \leq p \leq r; i_p \leq h \leq j_p; 1 \leq k \leq 2 \}$
be indeterminate weights satisfying
\begin{equation}\label{eq:quasicommz}
  z_{h_2,p_2,k_2} z_{h_1,p_1,k_1} = \begin{cases}
    z_{h_1,p_1,k_1} z_{h_2,p_2,k_2} &\text{if $p_1 \neq p_2$, or $k_1 \neq k_2$,}\\
    \qp12 z_{h_1,p_1,k_1} z_{h_2,p_2,k_2} &\text{if $p_1 = p_2$, $k_1 = k_2$, and $h_1 < h_2$.}
\end{cases}
  \end{equation}
We assign weights to the edges of $G_{J_p} = G_{[i_p,j_p]}$ as follows.
\begin{enumerate}
\item Assign weight $1$ to the $n - j_p + i_p - 1$ edges not incident
  upon the central vertex.
\item Assign weights $z_{i_p,p,1}, z_{i_p+1,p,1}, \dotsc, z_{j_p,p,1}$,
  to the $j_p - i_p + 1$ edges entering the central vertex,
  from bottom to top.
\item Assign weights $z_{i_p,p,2}, z_{i_p+1,p,2}, \dotsc, z_{j_p,p,2}$,
  to the $j_p - i_p + 1$ edges leaving the central vertex,
  from bottom to top.
\end{enumerate}
For example, the star network
$G = G_{J_1} \circ G_{J_2} = G_{[2,4]} \circ G_{[1,3]}$ is weighted as
\begin{equation}\label{eq:Gbraidfig}
  \resizebox{9cm}{!}{
\raisebox{-3.2cm}{
    \begin{tikzpicture}
  \node at (-6,2) {\large $4$};
  \node at (-6,0) {\large $3$};
  \node at (-6,-2) {\large $2$};
  \node at (-6,-4) {\large $1$};
  \node at (6,2) {\large $4$};
  \node at (6,0) {\large $3$};
  \node at (6,-2) {\large $2$};
  \node at (6,-4) {\large $1$};
  \draw[-] (-5,2)
  -- (-2.5,0) node[midway, right, xshift=-1mm, yshift=2mm]{\large $z_{4,1,1}$}
  -- (0,2)  node[midway, left, xshift=1mm, yshift=2mm]{\large $z_{4,1,2}$}
  -- (5,2);
  \draw[-] (-5,0)
  -- (-2.5,0) node[midway, above, xshift=-3mm, yshift=-1mm]{\large $z_{3,1,1}$}
  -- (0,0)  node[midway, above, xshift=3mm, yshift=-1mm]{\large $z_{3,1,2}$}
  -- (2.5,-2) node[midway, right, xshift=-1mm, yshift=2mm]{\large $z_{3,2,1}$}
  -- (5,0)  node[midway, left, xshift=1mm, yshift=2mm]{\large $z_{3,2,2}$};
  \draw[-] (-5,-2)
  -- (-2.5,0) node[midway, right, xshift=-1mm, yshift=-2mm]{\large $z_{2,1,1}$}
  -- (0,-2) node[midway, left, xshift=2mm, yshift=-2mm]{\large $z_{2,1,2}$}
  -- (2.5,-2) node[midway, above, xshift=-3mm, yshift=-1mm]{\large $z_{2,2,1}$}
  -- (5,-2) node[midway, above, xshift=3mm, yshift=-1mm]{\large $z_{2,2,2}$};
  \draw[-] (-5,-4)
  -- (0,-4) 
  -- (2.5,-2) node[midway, right, xshift=-1mm, yshift=-2mm] {\large $z_{1,2,1}$}
  -- (5,-4) node[midway, left, xshift= 2mm, yshift=-2mm] {\large $z_{1,2,2}$};
\filldraw (-5,0) circle (.7mm);
\filldraw (0,0) circle (.7mm);
\filldraw (5,0) circle (.7mm);
\filldraw (-5,2) circle (.7mm);
\filldraw (0,2) circle (.7mm);
\filldraw (5,2) circle (.7mm);
\filldraw (-5,-2) circle (.7mm);
\filldraw (0,-2) circle (.7mm);
\filldraw (5,-2) circle (.7mm);
\filldraw (-5,-4) circle (.7mm);
\filldraw (0,-4) circle (.7mm);
\filldraw (5,-4) circle (.7mm);
\filldraw (-2.5,0) circle (.7mm);
\filldraw (2.5,-2) circle (.7mm);            
\end{tikzpicture}
}}
\end{equation}
and the weighted path matrix of $G$ is
\begin{equation}\label{eq:Bex}
  \begin{aligned}
    B &=
  \begin{bmatrix}
    1 & 0              & 0              & 0               \\
    0 & z_{2,1,1}z_{2,1,2} & z_{2,1,1}z_{3,1,2} & z_{2,1,1}z_{4,1,2} \\
    0 & z_{3,1,1}z_{2,1,2} & z_{3,1,1}z_{3,1,2} & z_{3,1,1}z_{4,1,2} \\
    0 & z_{2,1,1}z_{2,1,2} & z_{2,1,1}z_{3,1,2} & z_{2,1,1}z_{4,1,2} 
  \end{bmatrix}\ntksp\ntksp
  \begin{bmatrix}
    z_{1,2,1}z_{1,2,2} & z_{1,2,1}z_{2,2,2} & z_{1,2,1}z_{3,2,2} & 0 \\
    z_{2,2,1}z_{1,2,2} & z_{2,2,1}z_{2,2,2} & z_{2,2,1}z_{3,2,2} & 0 \\
    z_{3,2,1}z_{1,2,2} & z_{3,2,1}z_{2,2,2} & z_{3,2,1}z_{3,2,2} & 0 \\
    0              & 0              & 0               & 1 
  \end{bmatrix}\\
  &= \begin{bmatrix}
    z_{1,1,1}z_{1,2,2} & z_{1,2,1}z_{2,2,2} & z_{1,2,1}z_{3,2,2} & 0 \\
    z_{2,1,1}(z_D + z_U)z_{1,2,2}
    & z_{2,1,1}(z_D + z_U)z_{2,2,2}
    & z_{2,1,1}(z_D + z_U)z_{3,2,2}
    & z_{2,1,1}z_{4,1,2} \\
    z_{3,1,1}(z_D + z_U)z_{1,2,2}
    & z_{3,1,1}(z_D + z_U)z_{2,2,2}
    & z_{3,1,1}(z_D + z_U)z_{3,2,2}
    & z_{3,1,1}z_{4,1,2} \\
    z_{4,1,1}(z_D + z_U)z_{1,2,2}
    & z_{4,1,1}(z_D + z_U)z_{2,2,2}
    & z_{4,1,1}(z_D + z_U)z_{3,2,2}
    & z_{4,1,1}z_{4,1,2} 
  \end{bmatrix},
  \end{aligned}
\end{equation}
where we have defined $z_D = z_{2,1,2}z_{2,2,1}$, $z_U = z_{3,1,2}z_{3,2,1}$.

\ssec{The map $\sigma_B$ associated to
$\wtc{s_{J_1}\ntnsp}q \cdots \wtc{s_{J_m}\ntnsp}q$}\label{ss:sigmab}

The final, algebraic, object which we associate to
the product $\wtc{s_{J_1}\ntnsp}q \cdots \wtc{s_{J_m}\ntnsp}q$
is a map $\sigma_B$ which allows us to
evaluate a linear functional $\theta_q$ at this product
by considering the corresponding generating function
$\imm{\theta_q}(x) \in \annnq$.
This strategy can be quite helpful when one has a simpler expression for
$\imm{\theta_q}(x)$ than for its coefficients
$\theta_q(T_w)$.
In our case, an expression for $\imm{\epsilon_q^\lambda}(x)$ is given by
(\ref{eq:immepsilon}), while we have no explicit formula for
$\epsilon_q^\lambda(T_w)$.

Let $Z_G$ be the quotient of the noncommutative ring
\begin{equation*}
    \mathbb Z[\qp12, \qm12]
    \langle z_{h_p,p,k}
    \,|\, p=1,\dotsc, r; h_p = i_p, \dotsc, j_p; k = 1,2 \rangle
    \end{equation*}
modulo the ideal generated by the
relations (\ref{eq:quasicommz}), and assume that $\qp12$, $\qm12$
commute with all other indeterminates.
Let $z_G$
be the product of all indeterminates $z_{h_p,p,k}$, in lexicographic order,
and for $f \in Z_G$,
let $[z_G]f$ denote the coefficient of $z_G$ in $f$.
Let $B$ be the weighted path matrix of $G$.
We define $\sigma_B$ to be the
$\zqq$-linear map
\begin{equation}\label{eq:sigmadef}
\begin{aligned}
  \sigma_B: \annnq &\rightarrow \zqq\\
\permmon xv &\mapsto [z_G] \permmon bv,
\end{aligned}
\end{equation}
where $[z_G] \permmon bv$ denotes the coefficient of $z_G$ in $\permmon bv$,
taken after $\permmon bv$ 
is expanded in the lexicographic basis of $Z_G$.
Note that the ``substitution'' $x_{i,j} \mapsto b_{i,j}$ is performed
only for monomials of the form $x^{e,v}$ in $\Ann$:
we define $\sigma_B(x^{u,w})$ by first expanding $x^{u,w}$
in the basis $\{ x^{e,v} \,|\, v \in \sn \}$, and {\em then} performing
the substitution.

To illustrate the utility of the function $\sigma_B$,
we consider both sides of the identity (\ref{eq:evalwithsigma})
when
the product of Kazhdan-Lusztig basis elements is
\begin{equation}\label{eq:sigmaexample}
\wtc{s_{[2,4]}\ntnsp}q \wtc{s_{[1,3]}\ntnsp}q = (1+q)\sum_{w \leq 3421} T_w \in H_4(q),
\end{equation}
and when the linear function 
$\theta_q: H_4(q) \rightarrow \zqq$
is defined by
\begin{equation*}
  \theta_q(T_w) = \begin{cases}
    1 &\text{if $w = 3412$},\\
    -1 &\text{if $w = 4312$},\\
    0 &\text{otherwise}.
  \end{cases}
  \end{equation*}
To compute the left-hand side of (\ref{eq:evalwithsigma}), we
evaluate
\begin{equation}\label{eq:LHS}
\theta_q(\wtc{s_{[2,4]}\ntnsp}q\wtc{s_{[2,4]}\ntnsp}q ) = (1+q)(1) + (0)(-1) = 1+q,
\end{equation}
since in (\ref{eq:sigmaexample})
$T_{3412}$ has coefficient $1+q$ and $T_{4312}$ has coefficient $0$.
To compute the right-hand side of (\ref{eq:evalwithsigma}), we first write
\begin{equation}\label{eq:eximm}
  \imm{\theta_q}(x) = q^{-2} x_{1,3}x_{2,4}x_{3,1}x_{4,2}
  - \qm52 x_{1,4}x_{2,3}x_{3,1}x_{4,2},
\end{equation}
since $\ell(3412) = 4$ and $\ell(4312) = 5$.
Now observe that the star network
and weighted path matrix corresponding to
$\wtc{s_{[2,4]}\ntnsp}q\wtc{s_{[2,4]}\ntnsp}q$ are
$G = G_{[2,4]} \circ G_{[1,3]}$ in (\ref{eq:Gbraidfig})
and $B$ in (\ref{eq:Bex}). Thus we compute
$\sigma_B(\imm{\theta_q}(x))$ by substituting
$x_{i,j} = b_{i,j}$
in (\ref{eq:eximm})
to obtain
\begin{equation}\label{eq:zGex}
  \begin{aligned}
q^{-2}b_{1,3}b_{2,4}b_{3,1}b_{4,2} &= q^{-2} z_{1,2,1}z_{3,2,2}z_{2,1,1}z_{4,1,2}
z_{3,1,1}(z_D + z_U)z_{1,2,2}z_{4,1,1}(z_D + z_U)z_{2,2,2},\\
\qm52 b_{1,4}b_{2,3}b_{3,1}b_{4,2} &= 0,
\end{aligned}
\end{equation}
and extract the coefficient of
$z_G = z_{1,2,1} z_{1,2,2} z_{2,1,1} \cdots z_{4,1,2}$ from these.
Since $z_D^2$ and $z_U^2$ are not square-free, we ignore two monomials
in the expansion of (\ref{eq:zGex}).  Now, using the relations
(\ref{eq:quasicommz}) to express the remaining two monomials
in lexicographic order, we introduce multiples of $\qp12$
for pairs
$(z_{h_1,p,k}, z_{h_2,p,k})$ of variables sharing second and third indices,
\begin{equation*}
  \begin{aligned}
  z_{3,2,2}z_{4,1,2}z_D &= \qp12 z_Dz_{3,2,2}z_{4,1,2},\\
  z_{3,2,2}z_{4,1,2}z_U &= \qp12 z_Uz_{3,2,2}z_{4,1,2},
  \end{aligned}
  \qquad
  \begin{aligned}
  z_{3,2,2}z_{1,2,2} &= \qp12 z_{1,2,2}z_{3,2,2},\\
  z_{3,2,2}z_{2,2,2} &= \qp12 z_{2,2,2}z_{3,2,2},
  \end{aligned}
  \qquad
  z_U z_D = q z_D z_U.
\end{equation*}
Specifically, we introduce a multiple of $q^2$ for the monomial in which
$z_D$ precedes $z_U$, and a multiple of $q^3$ in the other monomial.
Thus we have
\begin{equation*}
  [z_G] q^{-2} (q^2 + q^3)
  z_{1,2,1}z_{1,2,2}z_{2,1,1}z_D z_{2,2,2}z_{3,1,1}z_U z_{3,2,2}z_{4,1,1}z_{4,1,2} = 1+q,
\end{equation*}
which matches (\ref{eq:LHS}),
as desired.

The maps $\sigma$ behave well with respect to concatenation
and matrix products.
In particular, the result \cite[Prop.\,3.4]{KLSBasesQMBIndSgn}
applies verbatim to the networks, matrices, and maps
generalized as above from their earlier definitions in
\cite{KLSBasesQMBIndSgn}.

\begin{prop}\label{p:sigmacoprod}
  Let star networks
$G$, $H$
have weighted path matrices $B$, $C$, respectively. 
Then for all $u,w \in \sn$ we have
\begin{equation}\label{eq:sigmacoprod}
\sigma_{BC}(x^{u,w}) = \sum_{v \in \sn} \sigma_B(x^{u,v}) \sigma_C(x^{v,w}).
\end{equation}
\end{prop}
\begin{proof}
  Extend the proof of
  \cite[Prop.\,3.4]{KLSBasesQMBIndSgn}
  to include elements of the ring $Z_G$, as defined in Subsection~\ref{ss:zg}.
\end{proof}

  \section{$G$-tableaux and the combinatorics of the evaluation map $\sigma_B$}
  \label{s:gtabx}

  Justification of the identity (\ref{eq:evalwithsigma}),
  which allows us to prove Theorem~\ref{t:qepsilon}, depends upon
  a combinatorial interpretation of evaluations $\sigma_B(x^{u,v})$,
  which is stated in Proposition~\ref{p:sigmastats}.  This result
  and the preparatory Lemmas~\ref{l:sigmareduced} -- \ref{l:decomposestat}
  extend earlier work
  in \cite[\S 5--6]{CHSSkanEKL} and
  \cite[\S 3--4]{KLSBasesQMBIndSgn}
  on special cases of the evaluations.

First we observe two cases in which such an evaluation
  is quite simple.
\begin{lem}\label{l:sigmareduced}
  Let star network $G = G_{J_1} \circ \cdots \circ G_{J_m}$ have weighted path
  matrix $B$, and fix $u, v \in \sn$.
  \begin{enumerate}
  \item If no path family of type $u^{-1}v$ covers $G$,
    then $\sigma_B(x^{u,v}) = 0$.
  \item If exactly one path family of type $v$ covers $G$,
    then $\sigma_B(x^{e,v}) = \qev$.
  \end{enumerate}
  \end{lem}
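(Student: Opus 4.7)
The plan is to derive both conclusions from a single combinatorial expression for $\sigma_B(x^{u,v})$. The key tool is the identity
\begin{equation*}
\sigma_B(x^{u,v}) \;=\; [z_G]\,b_{u_1,v_1}\,b_{u_2,v_2}\,\cdots\,b_{u_n,v_n},
\end{equation*}
valid for all $u,v\in\sn$. This holds because the path-matrix entries $b_{i,j}\in Z_G$ satisfy the same relations~(\ref{eq:anqdef}) as the generators $x_{i,j}$ of $\anq$, so that $x_{i,j}\mapsto b_{i,j}$ extends to a $\zqq$-algebra homomorphism $\anq\to Z_G$ whose composition with the coefficient-extraction $[z_G]$ agrees with $\sigma_B$ on the basis $\{x^{e,w}\,|\,w\in\sn\}$ and hence on all of $\Ann$. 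These relations reduce, via the coproduct on $\anq$ together with the free commutation of $z$-variables from different star networks, to the single-star verification, which is the standard planar quantum-matrix calculus of \cite{KLSBasesQMBIndSgn}. Establishing this identity would be the first step.

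Expanding each factor $b_{u_i,v_i}$ as a sum of weights of paths in $G$ from source $u_i$ to sink $v_i$ rewrites the product as $\sum_{\pi}\wgt(\pi_1)\cdots\wgt(\pi_n)$, where $\pi=(\pi_1,\dotsc,\pi_n)$ ranges over tuples of paths with $\pi_i\colon u_i\to v_i$. Because $z_G$ is square-free in each indeterminate, a term $\wgt(\pi)$ contributes to $[z_G]$ only when $\wgt(\pi)$ uses each variable exactly once, that is, when $\pi$ covers $G$. Reindexing $\pi'_j\defeq\pi_{u^{-1}(j)}$ converts such a tuple into a path family with $\pi'_j\colon j\to v_{u^{-1}(j)}$, which is precisely of type $u^{-1}v$ in the paper's convention. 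For part~(1) the hypothesis that no such family exists forces every term of the sum to vanish, giving $\sigma_B(x^{u,v})=0$ immediately.

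For part~(2), set $u=e$; then the tuples $\pi$ are exactly the path families of type $v$, and by hypothesis only the unique covering family $\pi^*$ contributes. Its weight is the ordered product of all the variables $z_{h_p,p,k}$, each appearing once but in an order dictated by $\pi^*$. Rewriting this product in lexicographic order using the quasi-commutations~(\ref{eq:quasicommz}) yields a factor $\qp{N}{2}$, where $N$ counts pairs $(z_{h_1,p,k},z_{h_2,p,k})$ with $h_1<h_2$ appearing in reversed order in $\wgt(\pi^*)$. It remains to show that $N=\ell(v)$, so that the coefficient is $\qev$.

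The main obstacle is the identification $N=\ell(v)$. I would argue it by interpreting each reversed pair $(z_{h_1,p,k},z_{h_2,p,k})$ in $\wgt(\pi^*)$ geometrically as a crossing of two paths of $\pi^*$ occurring on the appropriate side of the central vertex of $G_{J_p}$, and by invoking the uniqueness of $\pi^*$ to rule out any pair of paths crossing more than once (a double crossing admits a local re-routing between the two crossings, producing a distinct covering family of the same type $v$). A careful case analysis then shows that once each pair of paths in $\pi^*$ crosses at most once, every inversion of $v$ is witnessed by exactly one reversed $(p,k)$-pair, and $N=\inv(v)=\ell(v)$. A convenient formalization is induction on $m$ via Proposition~\ref{p:sigmacoprod}, splitting off the last star $G_{J_m}$ and verifying that the accumulated $q$-power increases by exactly the number of new inversions introduced at that stage.
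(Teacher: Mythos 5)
Your argument is correct and follows essentially the same route as the paper's proof: both reduce part~(1) to the vanishing of $[z_G]$ when no covering family exists, and both settle part~(2) by showing that the unique covering family's weight rearranges into lexicographic order with exactly $\ell(v)$ factors of $\qp12$, using uniqueness to force each pair of paths to intersect at most once and identifying crossings with inversions. You are merely more explicit than the paper about why $\sigma_B(x^{u,v})=[z_G]\,b_{u_1,v_1}\cdots b_{u_n,v_n}$ rests on the entries $b_{i,j}$ satisfying the quantum matrix relations in $Z_G$.
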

  \begin{proof}
    (1) If no path family of type $u^{-1}v$ covers $G$,
    then we have $b_{u_1,v_1} \cdots b_{u_n,v_n} = 0$. 

    \noindent
    (2) Suppose that there is exactly one path family $\pi$ of type $v$
    which covers $G$.  Then for $i < j$, the pair $(v_i, v_j)$
    is an inversion in $v$ if and only if the paths $\pi_i$ and $\pi_j$ cross.
    Since $\pi$ is the unique path family of type $v$, the paths cannot cross
    more than once.  If $\pi_i$, $\pi_j$ do not intersect, or intersect without
    crossing, then each variable in $\wgt(\pi_i)$ is 
    lexicographically less than each variable in $\wgt(\pi_j)$. 
    Relative to one another, all of these variables appear
    in lexicographic order
    in the expression
    \begin{equation}\label{eq:wgtpi}
      \wgt(\pi) = \wgt(\pi_1) \cdots \wgt(\pi_n)
    \end{equation}
    and contribute $q^0$ to the coefficient of $z_G$.
    On the other hand, if $\pi_i$, $\pi_j$ do intersect at the central
    vertex of $G_{J_p}$,
    then for some indices $a < b$, 
    the variable $z_{b,p,2}$ appears in $\wgt(\pi_i)$,
    earlier in (\ref{eq:wgtpi}) than the lexicographically lesser variable
    $z_{b,p,2}$ which appears in $\wgt(\pi_j)$.  By (\ref{eq:quasicommz})
    this pair of variables contributes $\qp12$ to the coefficient of $z_G$.
  \end{proof}

  In other cases, the evaluation $\sigma_B(x^{u,v})$ is less simple,
  and we will interpret such an evaluation by covering a star network
\begin{equation}\label{eq:starnetfactor}
  G = G_{J_1} \circ \cdots \circ G_{J_m}
  \end{equation}
  with a path family $\pi = (\pi_1, \dotsc, \pi_n)$
  and arranging the component paths into a (French) Young diagram
  whose shape is some partition $\lambda$ of $n$.  Following
  \cite{CHSSkanEKL}, we call the resulting structure a {\em $G$-tableau},
  or more specifically a {\em $\pi$-tableau} of {\em shape $\lambda$}.
  If $\type(\pi) = w$, we say that the tableau has type $w$.
Since a $\pi$ tableau contains each path of $\pi$ exactly once,
and since there are no constraints on where paths can be placed,
there are always $n!$ $\pi$-tableaux of a fixed shape $\lambda$.
  
Given a $\pi$-tableau $U$ of shape $\lambda$,
  we define two Young tableaux $L(U)$ and $R(U)$ of shape $\lambda$
  by replacing each path in $\pi$ by its source and sink index, respectively.
A fixed path family $\pi$ of type $v$ and a permutation
$u \in \sn$ determine a path tableau $U(\pi,u,uv) = \pi_{u_1} \cdots \pi_{u_n}$
which satisfies
$L(U(\pi, u, uv)) = u_1 \cdots u_n$ and
$R(U(\pi, u, uv)) = (uv)_1 \cdots (uv)_n$.
The inclusion of $uv$ in our notation $U(\pi,u,uv)$ is superfluous but makes
clear the ordering of sinks as they appear in the tableau.

We define statistics on path families and on the tableaux containing them. 
Suppose that two paths $\pi_i$, $\pi_j$ pass through the central vertex of
some star network $G_{J_p}$ in the factorization (\ref{eq:starnetfactor}).
We call the triple $(\pi_i, \pi_j, p) = (\pi_j, \pi_i, p)$
a {\em crossing} of $\pi$ if the two paths cross there,
and a {\em noncrossing} otherwise.
Define
\begin{equation}\label{eq:cross}
  \cross(\pi) =
\# \{ (\pi_i, \pi_j, p) \,|\,
  (\pi_i, \pi_j, p) \text{ is a crossing} \}.
\end{equation}
Suppose that the triple $(\pi_i, \pi_j, p)$ is a noncrossing,
with $\pi_i$ entering and leaving the central vertex of $G_{J_p}$ below
$\pi_j$.  If $U$ is any $\pi$-tableau, then we call the triple
an {\em inverted noncrossing of $U$} if $\pi_j$
appears in an earlier column of $U$
than does $\pi_i$.
Define
\begin{equation}\label{eq:incross}
  \incross(U) =
\# \{ (\pi_i, \pi_j, p) \,|\,
  (\pi_i, \pi_j, p) \text{ is an inverted noncrossing in } U \}.
\end{equation}

Now using the statistics $\cross$ and $\incross$, 
we can combinatorially evaluate
$\sigma_C(x^{u,v})$ in the case that
$G$ is a simple star network with weighted path matrix $C$.

\begin{lem}\label{l:sigmareversalinterp}
 Let simple star network $G_J$ have weighted path matrix $C$.
  For $u,w \in \sn$ we have
  \begin{equation}\label{eq:sigmareversalinterp}
    \sigma_C(x^{u,w}) =
  \begin{cases}
    \qp{\cross(\pi)}2 q^{\incross(U(\pi,u,w))} &\text{if $u^{-1}w \in \mfs J$},\\
    0 &\text{otherwise},
    \end{cases}
  \end{equation}
  where $\pi$ is the unique path family
  of type $u^{-1}w$ covering $G_J$.
\end{lem}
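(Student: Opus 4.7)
The plan is to split on whether $v := u^{-1}w$ lies in $\mfs J$. The simple star network $G_J$ with $J = [a,b]$ has only two types of edges: trivial source-$i$-to-sink-$i$ edges for $i \notin J$, and edges through the central vertex connecting sources in $J$ to sinks in $J$. Hence every path family in $G_J$ has type lying in $\mfs J$, and if $v \notin \mfs J$ no such path family exists in $G_J$. Lemma~\ref{l:sigmareduced}(1) then immediately gives $\sigma_C(x^{u,w}) = 0$.

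Now assume $v \in \mfs J$. There is a unique path family $\pi$ of type $v$ covering $G_J$: for $i \in J$, $\pi_i$ passes through the central vertex with weight $z_{i,1,1} z_{v(i),1,2}$, and for $i \notin J$, $\pi_i$ is a trivial edge with weight $1$. Since $w_i = v(u_i)$, the unique path from source $u_i$ to sink $w_i$ is $\pi_{u_i}$, so each factor $b_{u_i,w_i}$ equals $\wgt(\pi_{u_i})$, and the product $b_{u_1,w_1} \cdots b_{u_n,w_n}$ is the single monomial $\wgt(\pi_{u_1}) \cdots \wgt(\pi_{u_n})$ in $Z_G$, with factors listed in the order dictated by $U(\pi,u,w)$. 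The next step is to sort this monomial into lex order using (\ref{eq:quasicommz}) and extract the coefficient of $z_G$.

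The $z_{h,1,1}$ and $z_{h,1,2}$ families commute with each other, so the two groups of variables can be sorted independently. Let $A = \{i \in [n] : u_i \in J\}$. Among the $k=1$ factors, the current sequence of $h$-indices is $(u_i)_{i \in A}$; sorting this to increasing order via $z_{h_2,1,1} z_{h_1,1,1} = \qp12 z_{h_1,1,1} z_{h_2,1,1}$ (for $h_1 < h_2$) introduces one factor of $\qp12$ per inversion, for a total $\qp12^{N_1}$ with $N_1$ the number of inversions of $(u_i)_{i \in A}$. Using $w_i = v(u_i)$, the analogous sort of the $k=2$ factors yields $\qp12^{N_2}$ with $N_2$ the number of inversions of $(w_i)_{i \in A}$. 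Thus $\sigma_C(x^{u,w}) = \qp12^{N_1 + N_2}$.

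The main obstacle is the combinatorial identity
\begin{equation*}
N_1 + N_2 = \cross(\pi) + 2\,\incross(U(\pi,u,w)).
\end{equation*}
I would establish it by case analysis on each unordered pair $\{a,b\} \subseteq J$ with $a<b$. Such a pair contributes to $N_1$ precisely when $u^{-1}(b) < u^{-1}(a)$, i.e., when $\pi_b$ precedes $\pi_a$ in $U$; and (using $w_i = v(u_i)$ once more) it contributes to $N_2$ precisely when the $u^{-1}$-ordering and the $v$-ordering of $a$ and $b$ are opposite. Matching these against the definitions of $\cross(\pi)$ (the pair counts when $v(a) > v(b)$) and $\incross(U)$ (the pair counts when $v(a) < v(b)$ and $\pi_b$ precedes $\pi_a$ in $U$), one checks that the pair contributes $1$ to both sides when it is a crossing, $0$ to both when it is an uninverted noncrossing, and $2$ to both when it is an inverted noncrossing. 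Summing over pairs yields the identity and completes the proof.
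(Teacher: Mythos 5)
Your approach is correct and takes a genuinely different, more direct route than the paper's. Rather than inducting on $\ell(u)$ (as the paper does, starting from Lemma~\ref{l:sigmareduced}(2) and pushing forward a recursion that requires the $Q_i$ bookkeeping), you evaluate $[z_G]\,c_{u_1,w_1}\cdots c_{u_n,w_n}$ directly as a power of $\qp12$ and identify the exponent. Your pair-by-pair case analysis is precisely the one the paper uses to prove identity~(\ref{eq:simplestarlem}) in Lemma~\ref{l:sigmareversalinv}, with $N_1 = \inv(u^*)$ and $N_2 = \inv(w^*)$; so your argument effectively delivers Lemma~\ref{l:sigmareversalinterp} and Lemma~\ref{l:sigmareversalinv} in one stroke, while the paper proves the first by induction and then derives the second from it.

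The one step you must justify is the starting point, namely that $\sigma_C(x^{u,w}) = [z_G]\,c_{u_1,w_1}\cdots c_{u_n,w_n}$. Definition~(\ref{eq:sigmadef}) does \emph{not} define $\sigma_C$ by substituting into the monomial $x_{u_1,w_1}\cdots x_{u_n,w_n}$; it defines $\sigma_C$ on the basis $\{x^{e,v}\}$ and stipulates that for $x^{u,w}$ one must first expand in that basis and only then substitute. Your shortcut is legitimate exactly when the entries $c_{i,j}$ of $C$ themselves satisfy the relations~(\ref{eq:anqdef}), so that $x_{i,j}\mapsto c_{i,j}$ extends to a ring homomorphism $\A\to Z_G$. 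For a simple star network this is in fact true: on the $J\times J$ block one has $c_{i,j}=z_{i,1,1}z_{j,1,2}$, a quantum outer product of two $\qp12$-commuting columns (a short check against~(\ref{eq:anqdef})), and outside that block $C$ is the identity. But the paper never states this, and without it the direct substitution has no a priori meaning. Add a brief verification that $C$ is a quantum matrix, or else reduce your computation to the definition by expanding $x^{u,w}$ one generator at a time via~(\ref{eq:anqdef}), as the paper's inductive step implicitly does; with that, your proof is complete.
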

\begin{proof}
  If $u^{-1}w \notin \mfs J$, then there is no path family
  of type $u^{-1}w$ which covers $G_J$ and the right-hand side of
  (\ref{eq:sigmareversalinterp}) is $0$.
  By Lemma~\ref{l:sigmareduced}, the left-hand side is $0$ as well.
  Assume therefore that we have $u^{-1}w \in \mfs J$.
  
  Suppose $u = e$. Then the left-hand side of (\ref{eq:sigmareversalinterp})
  is $\qp{\ell(w)}2$ by
  Lemma~\ref{l:sigmareduced}.
  The right-hand side is also
  $\qp{\cross(\pi)}2 q^{\incross(U(\pi,u,w))} = \qp{\ell(w)}2$,
  since the left tableau
  $u_1 \cdots u_n = 1 \cdots n$
  guarantees that $U(\pi,u,w)$ has no inverted noncrossings.

  Now suppose the claim to be true when $\ell(u) = 0,\dotsc, k$ and consider
  $u$ of length $k+1$.  Factor $u$ as $u = s_iu'$ with $\ell(u') = k$.
  By \cite[Eq.\,(2.5)]{KLSBasesQMBIndSgn}, we have
  \begin{equation*}
    \sigma_C(x^{u,w}) =
    \sigma_C(x^{s_iu', w}) = 
    \begin{cases}
      \sigma_C(x^{u',s_iw}) &\text{if $s_iw > w$},\\
      \sigma_C(x^{u',s_iw}) + (\qdiff)\sigma_C(x^{u',w}) &\text{if $s_iw < w$}.
    \end{cases}
  \end{equation*}
  By induction, and since $(u')^{-1}s_iw = u^{-1}w$, this is
  \begin{equation}\label{eq:sigmaccombin}
    \begin{cases}
      \qp{\cross(\pi)}2 q^{\incross(U(\pi,u',s_iw))} &\text{if $s_iw > w$},\\
      \qp{\cross(\pi)}2 q^{\incross(U(\pi,u',s_iw))}
      + (\qdiff)(\qp{\cross(\pi')}2 q^{\incross(U(\pi', u',w))})
      &\text{if $s_iw < w$},
     \end{cases}
  \end{equation}
  where $\pi$, as in the statement of the lemma,
  is the unique path family of type $(u')^{-1}s_iw = u^{-1}w$ covering $G_J$,
  and where $\pi'$ is the unique path family of type $(u')^{-1}w = u^{-1}s_iw$
  covering $G_J$.

  Let us consider the two $G_J$-tableaux appearing in (\ref{eq:sigmaccombin}),
  and relate their crossings and inverted noncrossings to those of $U(\pi,u,w)$.

  The first tableau,
  \begin{equation*}
    U(\pi, u', s_iw) = \pi_{u_1} \cdots \pi_{u_{i-1}} \pi_{u_{i+1}} \pi_{u_i} \pi_{u_{i+2}} \cdots \pi_{u_n},
  \end{equation*}
  differs from $U(\pi, u, w)$ only by the transposition of paths
  $\pi_{u_i}$ and $\pi_{u_{i+1}}$.
 Thus if
 $s_iw > w$, then 
  we have $u_i > u_{i+1}$ and $w_i < w_{i+1}$,
  i.e., the paths $\pi_{u_i}$ and $\pi_{u_{i+1}}$ cross.
  It follows that we have $\incross(U(\pi,u',s_iw)) = \incross(U(\pi,u,w))$, and
  the claim is true.
  On the other hand, if $s_iw < w$, then
  we have $u_i > u_{i+1}$ and $w_i > w_{i+1}$,
  i.e., the paths
  $\pi_{u_i}$ and $\pi_{u_{i+1}}$ form
  a (noninverted) noncrossing in $U(\pi,u',s_iw)$
  and an inverted noncrossing in $U(\pi,u,w)$.
  It follows that we have
  \begin{equation}\label{eq:firstincrossid}
    \incross(U(\pi,u',s_iw)) = \incross(U(\pi,u,w)) - 1.
    \end{equation}

  The second tableau,
  \begin{equation*}
    \begin{aligned}
      U(\pi', u', w) &= \pi'_{u_1} \cdots \pi'_{u_{i-1}} \pi'_{u_{i+1}} \pi'_{u_i} \pi'_{u_{i+2}} \cdots \pi'_{u_n} \\
      &= \pi_{u_i} \cdots \pi_{u_{i-1}} \pi'_{u_{i+1}} \pi'_{u_i} \pi_{u_{i+2}} \cdots \pi_{u_n},
    \end{aligned}
  \end{equation*}
  differs from the tableau $U(\pi,u,w)$ only in the pair
  $(\pi'_{u_{i+1}}, \pi'_{u_i})$ from sources $(u_{i+1}, u_i)$ to
  sinks $(w_i, w_{i+1})$, respectively.
  This pair forms a crossing in $U(\pi',u',w)$ in place of the 
  inverted noncrossing of $(\pi_{u_i}, \pi_{u_{i+1}})$ in $U(\pi,u,w)$.
  It is straightforward
  but tedious
  to check that
  all other contributions to the differences
  $\cross(\pi) - \cross(\pi')$ and
  $\incross(U(\pi,u,w)) - \incross(U(\pi',u', w))$
  are due to paths $\pi_{u_j}$ from sources $u_j$ to sinks $w_j$ with indices
  $j$ belonging to
  \begin{equation*}
  Q_i \defeq \{ j \,|\, u_{i+1} < u_j < u_i; \ w_{i+1} < w_j < w_i \}.
  \end{equation*}
  Each such path $\pi_{u_j}$ crosses
  neither $\pi_{u_i}$ nor $\pi_{u_{i+1}}$ in $\pi$,
  but crosses both $\pi'_{u_i}$ and $\pi'_{u_{i+1}}$ in $\pi'$.
  Similarly, for each such path $\pi_{u_j}$, exactly one of the pairs
  $(\pi_{u_j}, \pi_{u_i})$, $(\pi_{u_j}, \pi_{u_{i+1}})$
  forms an inverted noncrossing in $\pi$
  while neither of the pairs
  $(\pi_{u_j}, \pi'_{u_i})$, $(\pi_{u_j}, \pi'_{u_{i+1}})$
  forms an inverted noncrossing in $\pi'$.
  Thus we have
  \begin{equation}\label{eq:secondincrossid}
    \begin{gathered}
      \cross(\pi') = \cross(\pi) + 1 + 2|Q_i|,\\
      \incross(U(\pi',u',w)) = \incross(U(\pi,u,w)) - 1 - |Q_i|.
    \end{gathered}
    \end{equation}

  In summary, when $s_iw < w$, we combine (\ref{eq:firstincrossid}) and
  (\ref{eq:secondincrossid}) to see that
  the expression (\ref{eq:sigmaccombin})
  is equal to
  \begin{equation*}
    \begin{aligned}
      &\qp{\cross(\pi)}2 q^{\incross(U(\pi,u,w)) - 1} + (\qdiff)\qp{\cross(\pi')+1 + 2|Q_i|}2 q^{\incross(U(\pi,u,w)) - 1 - |Q_i|} \\
      &\quad = \qp{\cross(\pi)}2 q^{\incross(U(\pi,u,w))}
      ( q^{-1} + (\qdiff) \qp12 \qp{2|Q_i|}2 q^{-1} q^{-|Q_i|})\\
      &\quad = \qp{\cross(\pi)}2 q^{\incross(U(\pi,u,w))}( q^{-1} + 1 - q^{-1}),  
    \end{aligned}
  \end{equation*}
  and again the claim is true.
\end{proof}

Alternatively, when $G = G_J$ is a simple star network with weighted
path matrix $C$, we can algebraically evaluate $\sigma_C(x^{u,w})$
in terms of the
factorizations in (\ref{eq:leftcosetfactor}).
\begin{lem}\label{l:sigmareversalinv}
  Let simple star network $G_J$ have weighted path matrix $C$,
  and let $u,w \in \sn$ have the unique $\mfsmin J \times \mfs J$
  factorizations $u = u_- u^*$, $w = w_-w^*$. 
  Then
  we have
  \begin{equation*}
    \sigma_C(x^{u,w}) = \begin{cases}
      \qustar \qwstar &\text{if $u^{-1}w \in \mfs J$},\\
      0 &\text{otherwise.}
    \end{cases}
  \end{equation*}
\end{lem}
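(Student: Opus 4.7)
The plan is to deduce the algebraic formula from Lemma~\ref{l:sigmareversalinterp} by reducing the claim to a purely numerical identity, and then establishing that identity via a standard symmetric-group length formula together with a bijective reinterpretation of the $\incross$ statistic.

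If $u^{-1}w \notin \mfs J$, both statements give $0$. Otherwise, the hypothesis places $u$ and $w$ in a common left coset of $\mfs J$, so by uniqueness of minimum-length representatives $u_- = w_-$, and one computes $u^{-1}w = (u^*)^{-1}w^*$. Lemma~\ref{l:sigmareversalinterp} then yields
\[
\sigma_C(x^{u,w}) = \qp{\cross(\pi)}{2}\,q^{\incross(U(\pi,u,w))},
\]
where $\pi$ is the unique path family of type $v = (u^*)^{-1}w^*$ covering $G_J$. Because $G_J$ has a single central vertex and $v$ fixes $[n]\ssm J$, every crossing of $\pi$ occurs at that vertex and corresponds to an inversion of $v$, so $\cross(\pi) = \ell((u^*)^{-1}w^*)$. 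The claim therefore reduces to proving
\[
\ell((u^*)^{-1}w^*) + 2\,\incross(U(\pi,u,w)) = \ell(u^*) + \ell(w^*).
\]

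I would prove this identity by combining two inputs. First, a standard symmetric-group length formula, obtained by comparing $\ell(y^{-1}z)$ with the symmetric difference of the inversion sets of $y$ and $z$, gives
\[
\ell((u^*)^{-1}w^*) = \ell(u^*) + \ell(w^*) - 2\,|I(u^*) \cap I(w^*)|,
\]
where $I(\sigma)$ denotes the inversion set of $\sigma$. Second, I would establish the combinatorial equality $\incross(U(\pi,u,w)) = |I(u^*) \cap I(w^*)|$ by a double change of variables. A pair $(i,j) \in J^2$ with $i<j$ contributes to $\incross$ iff $v(i) < v(j)$ and $u^{-1}(j) < u^{-1}(i)$. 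Substituting $\alpha = u^{-1}(i)$, $\beta = u^{-1}(j)$ rewrites the three conditions as $\alpha,\beta \in u_-^{-1}(J)$, $\beta < \alpha$, $u(\alpha) < u(\beta)$, and $w(\alpha) < w(\beta)$. A second substitution $\alpha' = u_-(\alpha)$, $\beta' = u_-(\beta)$, both in $J$, combined with the factorizations $u = u_-u^*$, $w = u_-w^*$ and the equivalent characterization of $\mfsmin J$ that $u_-^{-1}$ is increasing on $J$, converts these to $\alpha' < \beta'$, $u^*(\alpha') > u^*(\beta')$, $w^*(\alpha') > w^*(\beta')$---precisely the condition that $(\alpha',\beta')$ is a common inversion of $u^*$ and $w^*$.

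Adding the two identities gives $\cross(\pi) + 2\,\incross(U(\pi,u,w)) = \ell(u^*) + \ell(w^*)$, hence $\sigma_C(x^{u,w}) = q^{(\ell(u^*)+\ell(w^*))/2} = \qustar\qwstar$, as desired. The main obstacle is the second change of variables: one must verify carefully that each of the three inequalities survives translation through the factorizations $u = u_-u^*$ and $w = u_-w^*$, and the translation of the ``earlier column'' ordering condition into a value-order condition on $J$ rests crucially on the monotonicity of $u_-^{-1}$ on $J$.
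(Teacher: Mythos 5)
Your argument is correct and takes a genuinely different route from the paper's.  Both reduce, via Lemma~\ref{l:sigmareversalinterp}, to the numerical identity $\inv(u^*)+\inv(w^*) = \cross(\pi) + 2\,\incross(U(\pi,u,w))$.  The paper then verifies this by a direct pair-by-pair comparison, treating each source pair $(i,j)$ and the triple $(\pi_i,\pi_j,1)$ together.  You instead factor it into two independent pieces: the length formula $\ell((u^*)^{-1}w^*)=\ell(u^*)+\ell(w^*)-2|I(u^*)\cap I(w^*)|$, and the bijective count $\incross(U(\pi,u,w))=|I(u^*)\cap I(w^*)|$, proved by the change of variables $i\mapsto(u^*)^{-1}(i)=u_-(u^{-1}(i))$.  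This is more modular, and it makes explicit the bijection between inverted noncrossings and common inversions of $u^*,w^*$ that the paper's per-pair argument relies on only implicitly (the source pair $(i,j)$ and the inversion pair it corresponds to are not the same pair, so a change of variables of exactly the sort you describe is in fact required).  Two small points of care.  First, the identity $\ell(y^{-1}z)=|I(y)\,\triangle\, I(z)|$, with $I(\sigma)=\{(a,b)\mid a<b,\ \sigma_a>\sigma_b\}$, is the one that holds under this paper's product convention, which is forced by $R(U(\pi,u,uv))_k=(uv)_k$ so that $(uv)_k=v_{u_k}$; under the opposite composition convention the analogous statement reads $\ell(yz^{-1})=|I(y)\,\triangle\, I(z)|$, so it is worth saying which you mean.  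Second, after the substitution $\alpha'=u_-(\alpha)$, $\beta'=u_-(\beta)$, the monotonicity of $u_-^{-1}$ on $J$ gives $\beta'<\alpha'$, not $\alpha'<\beta'$; since you only need the unordered pair, the conclusion (that it lies in $I(u^*)\cap I(w^*)$) is unaffected.
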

\begin{proof}
By Lemma~\ref{l:sigmareversalinterp} it is sufficient to show that
$\qustar \qwstar  = \qp{\cross(\pi)}2 q^{\incross(U(\pi,u,w))}$, 
equivalently
\begin{equation}\label{eq:simplestarlem}
  \inv(u^*) + \inv(w^*) = \cross(\pi) + 2 \, \incross(U(\pi,u,w)),
\end{equation}
whenever $u^{-1}w \in \mfs J$.

Consider a pair $(i,j)$ with $i < j$ and the contributions of this pair
and the triple $(\pi_i,\pi_j,1)$ to both sides of (\ref{eq:simplestarlem}).
If the paths do not intersect, then $i$ or $j$ does not belong to $J$
and we have a contribution of $0$ to both sides of (\ref{eq:simplestarlem}).
Assume therefore that the paths intersect.

If the triple $(\pi_i,\pi_j,1)$ is a crossing, then $(i,j)$ is an inversion in
exactly one of $u^*$ and $w^*$.
Thus we have a contribution of $1$ to both sides of (\ref{eq:simplestarlem}).
On the other hand, if $(\pi_i,\pi_j,1)$ is a noncrossing,
then $\pi_j$ intersects $\pi_i$ from above.
Thus $(\pi_i,\pi_j,1)$ is an inverted noncrossing if and only if $\pi_j$
precedes $\pi_i$ in $U(\pi,u,w)$, i.e., if and only if $j$ precedes $i$ in $u$.
But this condition is equivalent to $j$ preceding $i$ in $w$
since $\pi_i$ and $\pi_j$ do not cross.
Hence we have a contribution of $2$ to both sides of (\ref{eq:simplestarlem})
or a contribution of $0$ to both sides.
\end{proof}


To combinatorially evaluate expressions of the form $\sigma_B(x^{u,w})$
where $B$ is the weighted path matrix of an arbitrary star network $G$,
we will use the fact that $G$ is a concatenation of smaller star networks.
When a star network $G$ can be decomposed as a concatenation $G' \circ H$,
it is natural to similarly decompose any path family
$\pi = (\pi_1, \dotsc, \pi_n)$ that covers $G$
as $\pi = \pi^{G'} \circ \pi^H$, where $\pi^{G'} = (\pi_1^{G'}, \dotsc, \pi_n^{G'})$,
$\pi^H = (\pi_1^H, \dotsc, \pi_n^H)$,
and $\type(\pi) = \type(\pi^{G'}) \type(\pi^H)$.
One can also decompose a single path $\pi_i$ as $(\pi_i)^{G'} \circ (\pi_i)^H$
but one must be careful however: if $\mathrm{type}(\pi^{G'}) = y$ then
we have $(\pi_i)^{G'} = (\pi^{G'})_i = \pi_i^{G'}$,
but $(\pi_i)^H = (\pi^H)_{y_i}$, which is not in general equal to
$(\pi^H)_i = \pi_i^H$,
the component path of $\pi^H$ which starts at source $i$ of $H$.



The above decompositions of path families
and path tableaux
satisfy some simple identities.

\begin{lem}\label{l:decomposestat}
  Let path family $\pi$ cover star network $G = G' \circ H$,
  and let $\pi^{G'}$, $\pi^H$ be its restrictions to $G'$, $H$.  Then
  we have
  \begin{equation*}
    \cross(\pi) = \cross(\pi^{G'}) + \cross(\pi^H).
  \end{equation*}
  Now fix $u \in \sn$ and define $w = u \cdot \mathrm{type}(\pi)$,
  $v = u \cdot \mathrm{type}(\pi^{G'})$. Then we have
  \begin{equation*}
    \incross(U(\pi,u,w)) =
    \incross(U(\pi^{G'}\ntnsp,u,v)) + \incross(U(\pi^H\ntnsp,v,w)).
  \end{equation*}
\end{lem}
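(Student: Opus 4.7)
The plan is to exploit the factorization $G = G_{J_1} \circ \cdots \circ G_{J_m}$ and the refinement $G = G' \circ H$, splitting each triple $(\pi_i,\pi_j,p)$ contributing to $\cross$ or $\incross$ according to whether the central vertex of $G_{J_p}$ lies in $G'$ or in $H$. Both statistics then decompose as a sum over the two classes of factors, and it suffices to identify each class with the corresponding statistic on $\pi^{G'}$ or $\pi^H$.

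For the first identity, the restriction $\pi_i \mapsto \pi_i^{G'}$ preserves the vertical arrangement of any pair of paths at every central vertex of $G'$, so crossings of $\pi$ at $G'$-vertices biject with crossings of $\pi^{G'}$. For factors in $H$, the identification $(\pi_i)^H = \pi^H_{y_i}$, where $y = \type(\pi^{G'})$, bijects crossings $(\pi_i, \pi_j, p)$ at $H$-vertices with crossings $(\pi^H_{y_i}, \pi^H_{y_j}, p')$ of $\pi^H$ at the same vertex (with $p'$ reindexing the factor inside $H$'s own decomposition). Summing these two bijections yields $\cross(\pi) = \cross(\pi^{G'}) + \cross(\pi^H)$.

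For the second identity, an inverted noncrossing of $U(\pi, u, w) = \pi_{u_1} \cdots \pi_{u_n}$ is a noncrossing $(\pi_i, \pi_j, p)$ with $\pi_i$ below $\pi_j$ at the central vertex, together with the precedence condition $u^{-1}(j) < u^{-1}(i)$ (so that $\pi_j$ occupies an earlier column than $\pi_i$). When $G_{J_p}$ lies in $G'$, the bijection with noncrossings of $\pi^{G'}$ preserves both $u$ and the vertical ordering, so the inverted-noncrossing condition in $U(\pi^{G'}, u, v)$ is again $u^{-1}(j) < u^{-1}(i)$. When $G_{J_p}$ lies in $H$, the bijection identifies the triple with $(\pi^H_{y_i}, \pi^H_{y_j}, p')$ in $\pi^H$, whose inverted-noncrossing condition in $U(\pi^H, v, w) = \pi^H_{v_1} \cdots \pi^H_{v_n}$ is $v^{-1}(y_j) < v^{-1}(y_i)$. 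Using $v = uy$, a short computation in one-line notation gives $v^{-1}(y_k) = u^{-1}(k)$ for every $k$, so the two precedence conditions agree. Summing over both classes of factors completes the proof.

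The main technical point is the index identity $v^{-1}(y_k) = u^{-1}(k)$, which is what ensures that the precedence tests in the two tableaux really do match; everything else is a straightforward partition of the two statistics according to where each triple's central vertex lies.
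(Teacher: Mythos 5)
Your proof is correct and follows essentially the same approach as the paper's. The paper establishes the key alignment by observing that a pair of paths occupying positions $a$, $b$ in $U(\pi,u,w)$ have restrictions occupying the same positions $a$, $b$ in $U(\pi^{G'},u,v)$ and $U(\pi^H,v,w)$; your identity $v^{-1}(y_k) = u^{-1}(k)$ (following from $v = u\cdot\type(\pi^{G'})$, i.e.\ $v_a = y_{u_a}$) is just an explicit index-level restatement of that same positional alignment, so the precedence tests in all three tableaux coincide.
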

\begin{proof}
  Write $G' = G_{J_1} \circ \cdots \circ G_{J_k}$,
  $H = G_{J_{k+1}} \circ \cdots \circ G_{J_m}$ and let
  paths $\pi_i$, $\pi_j$ 
  intersect at the central vertex of some factor $G_{J_p}$.
  Suppose that this point of intersection is a crossing.
  Then it is a crossing of $\pi^{G'}$ if $p \leq k$ and a crossing
  of $\pi^H$ if $p > k$.
  Thus it contributes $1$ to $\cross(\pi)$
  and $1$ to either $\cross(\pi^{G'})$ (if $p \leq k$)
  or $\cross(\pi^H)$ (if $p > k$).

  Suppose on the other hand that the point of intersection is a noncrossing.
  Then it is a noncrossing of $\pi^{G'}$ if $p \leq k$ and
  a noncrossing of $\pi^H$ if $p > k$.  Moreover, $\pi_i$ enters
  and departs from the central vertex of $G_{J_p}$ above $\pi_j$
  if and only if the restrictions of these paths to $G'$ or $H$ do the same.
  Now consider the positions of these paths in their respective tableaux.
  Let $a$, $b$, be the positions of $\pi_i$ and $\pi_j$ in $U(\pi,u,w)$:
  $\pi_i = \pi_{u_a}$,
  $\pi_j=\pi_{u_b}$, and assume without loss of generality that $a < b$.
  Then the restrictions of the two paths to $G'$ and $H$ also appear
  in positions $a$ and $b$, for we have
%

\vspace{2mm}
\hhhpsp
\begin{center}
\newcolumntype{R}{>{$}c<{$}}
\begin{tabularx}{157mm}{|R|R|R|}%
\hline
& \phantom , &\\
\mbox{tableau}
& \mbox{path in position $a$}
& \mbox{path in position $b$} \\
& \phantom , &\\
\hline
& \phantom t &\\
U(\pi,u,w)
&\pi_{u_a} \mbox{ from source $u_a$ of $G$ to sink $w_a$}
&\pi_{u_b} \mbox{ from source $u_b$ of $G$ to sink $w_b$}\\
& \phantom t &\\
U(\pi^{G'},u,v)
&\pi_{u_a}^{G'} \mbox{ from source $u_a$ of $G'$ to sink $v_a$}
&\pi_{u_b}^{G'} \mbox{ from source $u_b$ of $G'$ to sink $v_b$}\\
& \phantom t &\\
U(\pi^H,v,w)
&\pi_{v_a}^H \mbox{ from source $v_a$ of $H$ to sink $w_a$}
&\pi_{u_b}^H \mbox{ from source $v_b$ of $H$ to sink $w_b$}\\
& \phantom t &\\
\hline
\end{tabularx}
\end{center}
\vspace{2mm}
\ssp
with
$\pi_{u_a} = \pi_{u_a}^{G'} \circ \pi_{v_a}^H$,
$\pi_{u_b} = \pi_{u_b}^{G'} \circ \pi_{v_b}^H$.
Thus the triple
$(\pi_{u_a}, \pi_{u_b}, p)$
contributes $1$ to $\incross(U(\pi,u,w))$
if and only if
$(\pi_{u_a}^{G'}, \pi_{u_b}^{G'}, p)$
contributes $1$ to $\incross(U(\pi^{G'},u,v))$ ($p \leq k$),
or
$(\pi_{u_a}^H, \pi_{u_b}^H, p)$
contributes $1$ to $\incross(U(\pi^H,v,w))$ ($p > k$).
\end{proof}

Combining the simple star network evaluation $\sigma_C(x^{u,w})$
from Lemma~\ref{l:sigmareversalinterp}
with the identities in Lemma~\ref{l:decomposestat},
we can now inductively prove a combinatorial evaluation of $\sigma_B(x^{u,w})$
where $B$ is the weighted path matrix of an arbitrary star network.

\begin{prop}\label{p:sigmastats}
  Let star network $G$ have weighted path matrix $B$.
For $u,w \in \sn$ we have 
\begin{equation}\label{eq:stats}
\sigma_B (x^{u,w}) = 
\sum_\pi \qp{\cross(\pi)}2
q^{\incross(U)},
\end{equation}
where the sum is over path families $\pi$ of type $u^{-1}w$ covering $G$,
and $U = U(\pi,u,w)$ is the unique $\pi$-tableaux of shape $(n)$
satisfying $L(U) = u_1 \cdots u_n$, $R(U) = w_1 \cdots w_n$.
\end{prop}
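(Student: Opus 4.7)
The natural approach is induction on the number $m$ of simple factors in $G = G_{J_1} \circ \cdots \circ G_{J_m}$. The base case $m=1$ is immediate from Lemma~\ref{l:sigmareversalinterp}: when $G = G_{J_1}$, the sum on the right-hand side of (\ref{eq:stats}) has at most one term (corresponding to the unique covering path family of type $u^{-1}w$ when $u^{-1}w \in \mfs{J_1}$), and its weight matches the simple-network evaluation exactly, while in the case $u^{-1}w \notin \mfs{J_1}$ both sides vanish.

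For the inductive step, factor $G = G' \circ H$ with $G' = G_{J_1} \circ \cdots \circ G_{J_{m-1}}$ and $H = G_{J_m}$, and let $B'$, $C$ denote the weighted path matrices of $G'$, $H$. Since weighted path matrices multiply under concatenation, $B = B'C$, and Proposition~\ref{p:sigmacoprod} gives
\begin{equation*}
\sigma_B(x^{u,w}) = \sum_{v \in \sn} \sigma_{B'}(x^{u,v})\, \sigma_C(x^{v,w}).
\end{equation*}
Apply the inductive hypothesis to expand $\sigma_{B'}(x^{u,v})$ as a sum over path families $\sigma$ of type $u^{-1}v$ covering $G'$, and use Lemma~\ref{l:sigmareversalinterp} to rewrite $\sigma_C(x^{v,w})$ as $\qp{\cross(\tau)}2 q^{\incross(U(\tau,v,w))}$ when $v^{-1}w \in \mfs{J_m}$ (with $\tau$ the unique covering path family of $H$ of that type), and as $0$ otherwise.

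The core combinatorial step is to identify the resulting triple sum with the right-hand side of (\ref{eq:stats}). Path families $\pi$ covering $G$ with $\type(\pi)=u^{-1}w$ are in bijection with triples $(v,\sigma,\tau)$ where $v\in\sn$, $\sigma$ covers $G'$ with $\type(\sigma)=u^{-1}v$, and $\tau$ covers $H$ with $\type(\tau)=v^{-1}w$: set $\pi = \sigma \circ \tau$ and $v = u\cdot\type(\sigma)$. Under this bijection Lemma~\ref{l:decomposestat} gives
\begin{equation*}
\cross(\pi) = \cross(\sigma) + \cross(\tau), \qquad \incross(U(\pi,u,w)) = \incross(U(\sigma,u,v)) + \incross(U(\tau,v,w)),
\end{equation*}
so the product of the two weights for $(\sigma,\tau)$ matches the single weight for $\pi$, and the sum collapses to the desired expression.

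The main obstacle will be the bookkeeping linking the algebraic summation index $v$ from Proposition~\ref{p:sigmacoprod} to the restriction of $\pi$ to the two halves of $G$: one must check that the correspondence $\pi \mapsto (\pi^{G'}\ntnsp, \pi^H)$ really biject onto the set of compatible pairs, and that the index translation described in Subsection~\ref{s:gtabx} (where $(\pi_i)^H = (\pi^H)_{y_i}$ rather than $(\pi^H)_i$) aligns correctly with the rows $L(U) = u_1 \cdots u_n$ and $R(U) = w_1 \cdots w_n$ appearing in the statement. Granted this careful setup, the remainder of the argument is the mechanical combination of the coproduct identity, the inductive hypothesis, and the additivity of $\cross$ and $\incross$ from Lemma~\ref{l:decomposestat}.
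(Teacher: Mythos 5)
Your proof is correct and takes essentially the same route as the paper: induction on the number of simple factors, peel off the last factor $H = G_{J_m}$, invoke Proposition~\ref{p:sigmacoprod} to split $\sigma_{B'C}(x^{u,w})$ over intermediate permutations, apply the inductive hypothesis and Lemma~\ref{l:sigmareversalinterp} to the two factors, and recombine via the additivity identities of Lemma~\ref{l:decomposestat}. The ``index translation'' concern you flag at the end is exactly what Lemma~\ref{l:decomposestat} is there to settle (its proof tabulates the positions of restricted paths in $U(\pi,u,w)$, $U(\pi^{G'},u,v)$, and $U(\pi^{H},v,w)$), so once you cite it you have already discharged the obligation.
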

\begin{proof}
  By Lemma~\ref{l:sigmareversalinterp}, the claim is true when $G$ is
  the simple star network $G_{[i_1,j_1]}$.
    
  Now suppose the claim is true for $G$ a concatenation of
  $1, \dotsc, m$ simple star networks
  and consider $G = G' \circ H$ where $G' = G_{J_1} \circ \cdots \circ G_{J_m}$
  and $H = G_{J_{m+1}}$.
    Let $B'$, $C$ be the path matrices of $G'$, $H$, respectively
    so that $B = B'C$ is the path matrix of $G$.

By Proposition~\ref{p:sigmacoprod}
    and induction we have
\begin{equation}\label{eq:sigmalong}
  \sigma_{B'C}(x^{u,w})
  =
\sum_{t \in \mfs{J_{m+1}}}
  \sum_{\pi^{G'}}
  \qp{\cross(\pi^{G'})}2 q^{\incross(U(\pi^{G'}\ntksp,u,wt^{-1}))}
  \qp{\cross(\pi^H)}2 q^{\incross(U(\pi^H,wt^{-1},w))},
  \end{equation}
  where $\pi^{G'}$ varies over all path families of type $u^{-1}wt^{-1}$
  which cover $G'$.
  Since pairs of path families of types $u^{-1}wt^{-1}$ and $t$ covering
  $G'$ and $H$, respectively, correspond to path families of type $u^{-1}w$
  covering $G$, we may apply
  Lemma~\ref{l:decomposestat} to the right-hand side above to obtain
  \begin{equation*}
  \sum_\pi
  \qp{\cross(\pi)}2 q^{\incross(U(\pi,u,w))},
  \end{equation*}
  where the sum is over all path families $\pi$ of type $u^{-1}w$ which cover
  $G$.
\end{proof}

\section{Products $\wtc{s_{J_1}\ntnsp}q \cdots \wtc{s_{J_m}\ntnsp}q$ 
and the $q$-immanant evaluation theorem}\label{s:qimmevalthm}





We complete our justification of
the crucial identity (\ref{eq:evalwithsigma}) in Theorem~\ref{t:qstem},
first using Proposition~\ref{p:qewcoeff} to relate
evaluations of the form $\sigma_B(x^{e,w})$ to
coefficients in the natural expansion of 
$\wtc{s_{J_1}\ntnsp}q \cdots \wtc{s_{J_m}\ntnsp}q$.
As a corollary, we obtain a second interpretation of the above coefficients
which generalizes a result of Deodhar.
The first two results generalize
\cite[Prop.\,3.6 -- Thm.\,3.7]{KLSBasesQMBIndSgn}.


\begin{prop}\label{p:qewcoeff}
  Let star network $G = G_{J_1} \circ \cdots \circ G_{J_m}$
  have weighted path matrix $B$, and fix $w \in \sn$.
Then 
the coefficient of $T_w$ in
$\wtc{s_{J_1}\ntnsp}q \cdots \wtc{s_{J_m}\ntnsp}q$ is
equal to $\qiew \sigma_B(x^{e,w})$.
\end{prop}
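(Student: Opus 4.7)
The plan is to induct on $m$, the number of factors. For the base case $m=1$, since $s_{J_1}$ avoids the patterns $3412$ and $4231$, the Kazhdan--Lusztig expansion (\ref{eq:klbasis}) collapses to $\wtc{s_{J_1}\ntnsp}q = \sum_{y \in \mfs{J_1}} T_y$, so the coefficient of $T_w$ is $1$ exactly when $w \in \mfs{J_1}$. Applying Lemma~\ref{l:sigmareversalinv} with $u = e$ yields $\sigma_B(x^{e,w}) = \qp{\ell(w)}2$ when $w \in \mfs{J_1}$ and $0$ otherwise, so $\qiew\,\sigma_B(x^{e,w})$ matches.

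For the inductive step, factor $G = G' \circ H$ with $G' = G_{J_1} \circ \cdots \circ G_{J_{m-1}}$ and $H = G_{J_m}$, and let $B'$, $C$ denote their weighted path matrices, so that $B = B'C$. By the inductive hypothesis,
\begin{equation*}
\wtc{s_{J_1}\ntnsp}q \cdots \wtc{s_{J_{m-1}}\ntnsp}q = \sum_{v \in \sn} \qm{\ell(v)}2 \, \sigma_{B'}(x^{e,v}) \, T_v.
\end{equation*}
Right-multiplying by $\wtc{s_{J_m}\ntnsp}q = \sum_{y \in \mfs{J_m}} T_y$, applying (\ref{eq:tcprod}), and using $T_{v_-} T_y = T_{v_- y}$ for $v_- \in \mfsmin{J_m}$ and $y \in \mfs{J_m}$, turns $T_v \wtc{s_{J_m}\ntnsp}q$ into $q^{\ell(v^*)} \sum_{y \in \mfs{J_m}} T_{v_- y}$. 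Its coefficient at $T_w$ is $q^{\ell(v^*)}$ when $v_- = w_-$ and $0$ otherwise. Summing over $v$, the coefficient of $T_w$ in the whole product is
\begin{equation*}
\sum_{\substack{v \in \sn \\ v_- = w_-}} \qm{\ell(v)}2 \, q^{\ell(v^*)} \, \sigma_{B'}(x^{e,v})
= \qm{\ell(w_-)}2 \sum_{\substack{v \in \sn \\ v_- = w_-}} \qp{\ell(v^*)}2 \, \sigma_{B'}(x^{e,v}),
\end{equation*}
using $\ell(v) = \ell(v_-) + \ell(v^*) = \ell(w_-) + \ell(v^*)$.

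On the right-hand side, Proposition~\ref{p:sigmacoprod} gives $\sigma_B(x^{e,w}) = \sum_v \sigma_{B'}(x^{e,v}) \, \sigma_C(x^{v,w})$, and Lemma~\ref{l:sigmareversalinv} evaluates $\sigma_C(x^{v,w}) = \qp{\ell(v^*)}2 \qp{\ell(w^*)}2$ exactly when $v^{-1}w \in \mfs{J_m}$, i.e.\ when $v_- = w_-$, and $0$ otherwise. Collapsing $\qiew \, \qp{\ell(w^*)}2 = \qm{\ell(w_-)}2$ reproduces precisely the expression obtained for the left-hand side, closing the induction. The main obstacle is purely bookkeeping: tracking the three independent powers of $q$ coming from the normalizing factor $\qiew$, the Hecke relation~(\ref{eq:tcprod}), and the simple-star evaluation of Lemma~\ref{l:sigmareversalinv}, and checking that they collapse via $\ell(w) = \ell(w_-) + \ell(w^*)$ once Lemma~\ref{l:sigmareversalinv} restricts the support to $v_- = w_-$.
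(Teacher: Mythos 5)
Your proof is correct and follows essentially the same approach as the paper's: induction on $m$, decomposing $G = G' \circ G_{J_m}$, applying Proposition~\ref{p:sigmacoprod} and Lemma~\ref{l:sigmareversalinv} to evaluate $\sigma_B(x^{e,w})$, and using the identity~(\ref{eq:tcprod}) together with the factorization $w = w_- w^*$ and length additivity to control the coefficient of $T_w$. The only cosmetic difference is that you carry the expression $\qm{\ell(v)}2 \sigma_{B'}(x^{e,v})$ through directly rather than introducing the auxiliary coefficients $a_w$ and meeting in the middle at $\qew \sum_t a_{w_-t}\,q^{\ell(t)}$ as the paper does; the underlying manipulations are identical.
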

\begin{proof}
  Consider the simple star network $G = G_{J_1}$
  and its weighted path matrix $B$.
  By Lemma~\ref{l:sigmareversalinv}
  we have
\begin{equation*}
\sigma_B(x^{e,w}) =
\begin{cases}
\qew &\text{if $w \in \mfs{J_1}$},\\
0    &\text{otherwise}.
\end{cases}
\end{equation*}
Since $\wtc{s_{J_1}\ntnsp}q = \sum_{w \in \mfs{J_1}} T_w$,
the result is true for any simple star network.
	
Now assume that the result holds for concatenations 
of $1,\dotsc, m-1$ simple star networks,
and define $\{ a_w \,|\, w \in \sn \} \subseteq \mathbb Z[q]$ by
\begin{equation*}
\wtc{s_{J_1}\ntnsp}q \cdots \wtc{s_{J_{m-1}}\ntnsp}q = \sum_{w \in \sn} a_w T_w.
\end{equation*}
Consider the star network $G = G_{J_1} \circ \cdots \circ G_{J_m}$
with weighted path matrix $B$ and decompose $G$ as $G' \circ H$,
where $G' = G_{J_1} \circ \cdots \circ G_{J_{m-1}}$
has weighted path matrix $B'$ and
$H = G_{J_m}$ has weighted path matrix $C$.
By Proposition~\ref{p:sigmacoprod} we have
\begin{equation*}
  \sigma_B(x^{e,w}) =
  \sum_{u \in \mfs{J_m}} \sigma_{B'}(x^{e,wu^{-1}}) \sigma_C(x^{wu^{-1},w}),
\end{equation*}
where the sum is taken over $u \in \mfs{J_m}$ by Lemma~\ref{l:sigmareversalinv}.
Now factor $w$ as $w_- w^*$ with $w_- \in \mfsmin{J_m}$, $w^* \in \mfs{J_m}$,
and define $t = w^* u^{-1}$ so that we have $wu^{-1} = w_-w^* (w^*)^{-1}t = w_-t$.
Making this substitution in the sum
and observing that as $u$ varies over $\mfs{J_m}$, so does $t$,
we apply Lemma~\ref{l:sigmareversalinv}
to the second factor in each summand to obtain
\begin{equation*}
\sum_{t \in \mfs{J_m}} \ntksp \sigma_{B'} (x^{e,w_-t})  \sigma_C (x^{w_-t,w})
= \sum_{t \in \mfs{J_m}} \ntksp \sigma_{B'} (x^{e,w_-t})  \qwmtstar \qwstar
= \, \qwstar \ntksp \sum_{t \in \mfs{J_m}} \ntksp \sigma_{B'}(x^{e,w_-t}) \qet.
\end{equation*}
By induction, this is
\begin{equation*}
\qwstar \ntksp \sum_{t \in \mfs{J_m}} \ntksp \qwmt a_{w_-t} \qet
= \, \qwstar \ntksp \sum_{t \in \mfs{J_m}} \ntksp \qwm \qet a_{w_-t} \qet
= \, \qew \ntksp \sum_{t \in \mfs{J_m}}\ntksp a_{w_-t} q^{\ell(t)}.
\end{equation*}	
On the other hand, consider the element
\begin{equation}\label{eq:element}
\wtc{s_{J_m}\ntnsp}q \cdots \wtc{s_{J_m}\ntnsp}q
= \Big( \sum_{w \in \sn} a_w T_w \Big) \wtc{s_{J_m}\ntnsp}q
= \sum_{w \in \sn} a_w T_{w_-} T_{w^*} \wtc{s_{J_m}\ntnsp}q.
\end{equation}
By (\ref{eq:klbasis}), (\ref{eq:tcprod})
and the fact that reversals \avoidp, this is
\begin{equation*}
  \sum_{y \in \mfsmin{J_m}} \ntksp T_y
  \ntksp \sum_{t \in \mfs{J_m}} \ntksp a_{yt} T_t \wtc{s_{J_m}\ntnsp}q
= \sum_{y \in \mfsmin{J_m}} \ntksp T_y \wtc{s_{J_m}\ntnsp}q
\ntksp \sum_{t \in \mfs{J_m}} \ntksp a_{yt} q^{\ell(t)} 
= \sum_{y \in \mfsmin{J_m}} \ntksp T_y
\ntksp \sum_{v \in \mfs{J_m}} \ntksp T_v
\ntksp \sum_{t \in \mfs{J_m}} \ntksp a_{yt} q^{\ell(t)}.
\end{equation*}
Now
$\qew$ times the coefficient of $T_w$ in this expression is
\begin{equation*}
\qew \sum_{t \in \mfs{J_m}} a_{w_-t} q^{\ell(t)}. \qedhere
\end{equation*}
\end{proof}

Now we have a quick proof of the
$q$-immanant evaluation theorem
for star networks.
\begin{thm}\label{t:qstem}
  Let $\theta_q: \hnq \rightarrow \zqq$ be linear, and let
  star network $G = G_{J_1} \circ \cdots \circ G_{J_m}$
  have weighted path matrix $B$.
  Then we have
\begin{equation}\label{eq:qstem}
  \theta_q(\wtc{s_{J_1}\ntnsp}q \cdots \wtc{s_{J_m}\ntnsp}q)
  = \sigma_B(\imm{\theta_q}(x)).
\end{equation}
\end{thm}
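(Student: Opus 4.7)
The plan is to reduce the theorem directly to Proposition~\ref{p:qewcoeff} by expanding both sides of (\ref{eq:qstem}) in the natural basis $\{T_w \,|\, w \in \sn\}$ and using linearity of the relevant maps. Since Proposition~\ref{p:qewcoeff} already furnishes the identity $[T_w](\wtc{s_{J_1}\ntnsp}q \cdots \wtc{s_{J_m}\ntnsp}q) = \qiew \sigma_B(x^{e,w})$, the statement of Theorem~\ref{t:qstem} is just the ``generating-function repackaging'' of this fact.

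First I would rewrite the right-hand side. By the definition (\ref{eq:qimm}) of the generating function, we have
\begin{equation*}
\imm{\theta_q}(x) = \sum_{w \in \sn} \qiew \theta_q(T_w) \, x^{e,w}.
\end{equation*}
Since $\sigma_B$ is $\zqq$-linear and the scalars $\qiew \theta_q(T_w)$ lie in $\zqq$, this gives
\begin{equation*}
\sigma_B(\imm{\theta_q}(x)) = \sum_{w \in \sn} \qiew \theta_q(T_w) \, \sigma_B(x^{e,w}).
\end{equation*}

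Next I would rewrite the left-hand side. Writing $\wtc{s_{J_1}\ntnsp}q \cdots \wtc{s_{J_m}\ntnsp}q = \sum_{w \in \sn} c_w T_w$ in the natural basis and applying linearity of $\theta_q$, we obtain
\begin{equation*}
\theta_q(\wtc{s_{J_1}\ntnsp}q \cdots \wtc{s_{J_m}\ntnsp}q) = \sum_{w \in \sn} c_w \, \theta_q(T_w).
\end{equation*}
The conclusion now follows from the identification $c_w = \qiew \sigma_B(x^{e,w})$, which is precisely Proposition~\ref{p:qewcoeff}.

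There is essentially no obstacle here; the theorem is stated separately for conceptual clarity, but its proof is a one-line consequence of Proposition~\ref{p:qewcoeff} combined with the definition of $\imm{\theta_q}(x)$ and the linearity of $\sigma_B$ and $\theta_q$. The real content was absorbed into the inductive proof of Proposition~\ref{p:qewcoeff}, whose base case uses Lemma~\ref{l:sigmareversalinv} and whose inductive step uses the coproduct-like identity of Proposition~\ref{p:sigmacoprod} together with the coset factorization (\ref{eq:leftcosetfactor}) and the product rule (\ref{eq:tcprod}).
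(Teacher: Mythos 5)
Your proposal is correct and matches the paper's own argument essentially verbatim: both proofs expand $\imm{\theta_q}(x)$ via (\ref{eq:qimm}), push $\sigma_B$ through by linearity, invoke Proposition~\ref{p:qewcoeff} to identify $\qiew\sigma_B(x^{e,w})$ with the coefficient of $T_w$ in the product, and then reassemble to $\theta_q(\sum_w a_w T_w)$. Nothing is missing; the theorem really is the one-line repackaging you describe.
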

\begin{proof}
  Write 
$\wtc{s_{J_1}\ntnsp}q \cdots \wtc{s_{J_m}\ntnsp}q = \sum_{w \in \sn} a_w T_w$.
  Then the right-hand side of (\ref{eq:qstem}) is
\begin{equation*}
 \sigma_B \Big(\sum_{w \in \sn} \theta_q(T_w) \qiew x^{e,w} \Big)
  = \sum_{w \in \sn} \theta_q(T_w) \qiew \sigma_B(x^{e,w})
  = \sum_{w\in \sn} \theta_q(T_w) a_w
  = \theta_q \Big(\sum_{w \in \sn} a_wT_w \Big),
\end{equation*}
where the second equality follows from Proposition~\ref{p:qewcoeff}.
But this is the left-hand side of (\ref{eq:qstem}).
\end{proof}

Proposition~\ref{p:qewcoeff} and the special case $u = e$ 
of Proposition~\ref{p:sigmastats} yield a proof of
a generalization of Deodhar's defect formula \cite[Prop.\,3.5]{Deodhar90}
for coefficients in the expansion of $(1 + T_{s_{i_1}}) \cdots (1 + T_{s_{i_m}})$, 
which we will use in Section~\ref{s:indsgneval}.
Let $\pi = (\pi_1, \dotsc, \pi_n)$ be a path family covering a star network
$G = G_{J_1} \circ \cdots \circ G_{J_m}$.
If two paths $\pi_i$, $\pi_j$ intersect at the central vertex
of $G_{J_p}$, call the triple $(\pi_i, \pi_j, p)$
{\em defective} or {\em a defect}
if the paths
have previously crossed an odd number of times (i.e., in
$G_{J_1}, \dotsc, G_{J_{i_{p-1}}}$).
Let $\dfct(\pi)$ denote the number of defects of $\pi$,
\begin{equation}\label{eq:dfctdef}
  \dfct(\pi) = \# \{(\pi_i, \pi_j, p) \,|\,
  (\pi_i, \pi_j, p) \text{ defective } \}.
\end{equation}
The original definition of $\dfct(\pi)$ given in \cite[Defn.\,2.2]{Deodhar90}
(as interpreted in \cite[Rmk.\,6]{BWHex})
and used throughout \cite{KLSBasesQMBIndSgn}
is simply the number of {\em indices} $p$ appearing in defective triples
$(p, \pi_i, \pi_j)$, since each simple star $G_{J_p} = G_{[j_p,j_p+1]}$
considered in those papers completely determines the
two paths to intersect at its central vertex.
The proof of the following result is nearly identical to that of
\cite[Cor.\,4.2]{KLSBasesQMBIndSgn}.
We include it here, both for the convenience of the reader,
and because it relies upon
Propositions~\ref{p:sigmastats} and \ref{p:qewcoeff} in this paper,
which differ from the corresponding (weaker) results
\cite[Prop.\,4.1]{KLSBasesQMBIndSgn},
\cite[Prop.\,3.6]{KLSBasesQMBIndSgn}
in that paper.

\begin{cor}\label{c:sigmastats}
  The coefficients in the expansion
$\wtc{s_{J_1}\ntnsp}q \cdots \wtc{s_{J_m}\ntnsp}q = \underset w \sum a_w T_w$
are given by
\begin{equation*}
  a_w = \sum_\pi q^{\dfct(\pi)},
\end{equation*}
where the sum is over all path families of type $w$ which cover the star network
$G_{J_1} \circ \cdots \circ G_{J_m}$.
\end{cor}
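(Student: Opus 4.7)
The plan is to combine Propositions~\ref{p:qewcoeff} and~\ref{p:sigmastats} to turn the coefficient $a_w$ into a $q$-enumeration of path families, and then reduce the corollary to a statistic identity that can be verified one pair of paths at a time. From Proposition~\ref{p:qewcoeff} we have $a_w = \qiew \sigma_B(x^{e,w})$, and the case $u = e$ of Proposition~\ref{p:sigmastats} gives
\begin{equation*}
\sigma_B(x^{e,w}) = \sum_\pi \qp{\cross(\pi)}2 \, q^{\incross(U(\pi,e,w))},
\end{equation*}
where the sum runs over path families $\pi$ of type $w$ covering $G$. Combining the two yields
\begin{equation*}
a_w = \sum_\pi q^{\tfrac12(\cross(\pi) - \ell(w)) + \incross(U(\pi,e,w))},
\end{equation*}
so it suffices to show that for each such $\pi$ the exponent equals $\dfct(\pi)$.

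I would prove this exponent identity pair by pair. Fix an unordered pair $\{a,b\}$ with $a < b$, and enumerate in order $p_1 < p_2 < \cdots$ the values of $p$ at which both $\pi_a$ and $\pi_b$ pass through the central vertex of $G_{J_p}$; classify each such $p_j$ as a crossing (C) or noncrossing (N), and let $k_{ab}$ be the total number of C's. Since $\pi_a$ starts below $\pi_b$ and only C's interchange their relative vertical order, $(a,b)$ is an inversion of $w$ iff $k_{ab}$ is odd, so the pair's contribution to $\tfrac12(\cross(\pi) - \ell(w))$ equals $\lfloor k_{ab}/2 \rfloor$. Meanwhile a crossing at $p_j$ is defective iff the number of preceding C's, namely $j-1$ when restricted to this pair, is odd; these are exactly the $2$nd, $4$th, $\dotsc$ crossings, contributing $\lfloor k_{ab}/2 \rfloor$ defective crossings and matching the preceding term.

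It remains to identify the defective noncrossings for the pair $\{a,b\}$ with the inverted noncrossings of $U(\pi,e,w) = \pi_1 \cdots \pi_n$. If the number of preceding C's at $p_j$ is odd then $\pi_a$ sits above $\pi_b$ entering $G_{J_{p_j}}$, so in the triple $(\pi_a,\pi_b,p_j)$ the upper path is $\pi_a$, and since $a < b$ the upper path appears earlier in $U(\pi,e,w)$; that is, the noncrossing is simultaneously a defect and an inverted noncrossing. If the count is even the situation reverses and the noncrossing is neither. Summing over all pairs $\{a,b\}$ then gives $\dfct(\pi) = \tfrac12(\cross(\pi) - \ell(w)) + \incross(U(\pi,e,w))$.

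The main obstacle will be this last step: one must carefully check that the parity of the previous C-count between $\pi_a$ and $\pi_b$ governs \emph{both} the defect status of a noncrossing at $p_j$ and which of the two paths occupies the upper position there, so that the two a priori different notions of ``inversion'' — defining $\dfct$ on the one hand and $\incross$ on the other — coincide term by term rather than merely in aggregate. Once this local correspondence is in place, the corollary follows directly from the identity above applied inside Proposition~\ref{p:qewcoeff}.
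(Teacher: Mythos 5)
Your argument is correct and follows the paper's own proof essentially verbatim: both substitute $u=e$ into Proposition~\ref{p:sigmastats}, invoke Proposition~\ref{p:qewcoeff}, and reduce to the statistic identity $\dfct(\pi)=\tfrac12\bigl(\cross(\pi)-\ell(w)\bigr)+\incross(U(\pi,e,w))$ by observing that inverted noncrossings of $U(\pi,e,w)$ are precisely defective noncrossings and that defective crossings number $\tfrac12(\cross(\pi)-\inv(w))$, checked pair by pair via parity of previous crossings. Only one wording issue: since $p_1<p_2<\cdots$ enumerates \emph{all} intersections of $\pi_a,\pi_b$ (both C's and N's), the number of preceding C's at $p_j$ is generally not $j-1$; the conclusion that defective crossings are the $2$nd, $4$th, \dots\ C's is still right, but that parenthetical should be dropped or restated.
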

\begin{proof} 
  Substitute $u = e$ in Proposition~\ref{p:sigmastats}.
  The right-hand side of (\ref{eq:stats}) is a sum over
  path families $\pi$ of type $w$, and
  each tableau $U = U(\pi,e,w)$ is simply
  the sequence $\pi = (\pi_1,\dotsc,\pi_n)$.
  Thus an inverted noncrossing of $U$ is simply a noncrossing
  in which the upper path has index less than that of the lower path, i.e.,
  a defective noncrossing.  To relate these to all defects,
  let us temporarily define
  \begin{equation*}
    \begin{aligned}
      \dnc(\pi) &= \text{ number of defective noncrossings of $\pi$},\\
      \dc(\pi) &= \text{ number of defective crossings of $\pi$},
    \end{aligned}
  \end{equation*}
  so that $\dfct(\pi) = \dnc(\pi) + \dc(\pi)$.
  Now observe that for any path familiy of type $w$ we have
    \begin{equation*}
    \cross(\pi) = \inv(w) + 2\dc(\pi)
    \end{equation*}
    because if paths $\pi_a$, $\pi_b$ cross $k$ times,
    then at most one of those crossings contributes to $\inv(w)$,
    while exactly half of the remaining crossings are defective.
    Thus the right-hand side of (\ref{eq:stats}) becomes
    \begin{equation}\label{eq:rhsstats}
      \sum_\pi \qp{\inv(w) + 2\dc(\pi)}2 q^{\dnc(\pi)}
      = \sum_\pi \qew q^{\dc(\pi)+\dnc(\pi)} = \qew \sum_\pi q^{\dfct(\pi)},
    \end{equation}
    where the sum is over all path families of type $w$ which cover the
    star network $G_{J_1} \circ \cdots \circ G_{J_m}$.
    
    By Proposition \ref{p:qewcoeff}, the left-hand side of (\ref{eq:stats})
    is $\qew a_w$.
  Combining this with (\ref{eq:rhsstats}), we have the desired result.
\end{proof}

\section{Evaluation of induced sign characters}\label{s:indsgneval}

We now complete the solution of Problem~\ref{prob:interp} by providing
a combinatorial interpretation
in Theorem~\ref{t:qepsilon}
for all expressions of the form
\begin{equation}\label{eq:mainprob}
  \epsilon_q^\lambda(\wtc{s_{J_1}\ntnsp}q \cdots \wtc{s_{J_m}\ntnsp}q),
\end{equation}
with $\lambda \vdash n$ and $(J_1,\dotsc,J_m)$ a sequence
of subintervals of $[n]$.
Generalizing \cite[Thm.\,5.4]{KLSBasesQMBIndSgn},
Theorem~\ref{t:qepsilon} relies upon several lemmas and definitions
which are straightforward generalizations of those given in
\cite[\S 5]{KLSBasesQMBIndSgn}.
On the other hand, the fact that
not all reversals appearing as subscripts in (\ref{eq:mainprob}) are
adjacent transpositions complicates matters significantly,
requiring a new involution (\ref{eq:zeta}) and four new identities
collected in Lemma~\ref{l:statids}.

By Theorem~\ref{t:qstem} and Equation (\ref{eq:immuslambdamin}), 
each expression (\ref{eq:mainprob}) is equal to
\begin{equation}\label{eq:epsilonimmcombined}
  \sigma_B(\imm{\epsilon_q^\lambda}(x))
  = \sum_{u \in \smash{\slambdamin}} \sum_{y \in \slambda} (-1)^{\ell(y)} \qm{\ell(y)}2
\sigma_B(x^{u,yu}),
\end{equation}
where $B$ is the weighted path matrix of the star network
$G = G_{J_1} \circ \cdots \circ G_{J_m}$.
Thus it suffices to combinatorially interpret the right-hand side of
(\ref{eq:epsilonimmcombined}).
To do this, we will apply Proposition~\ref{p:sigmastats}
to each expression $\sigma_B(x^{u,yu})$
and will compute statistics for tableaux belonging to the set
\begin{equation*}
    \mathcal U_I = \mathcal U_I(G)
    \defeq \{ U(\pi, u, yu ) \,|\, \pi \text{ covers } G, u = u(I), 
    y \in \slambda \},
\end{equation*}
where $I \mapsto u(I)$ is the bijection defined after
(\ref{eq:lambepsilon}).
Note that our restriction on $y$ forces the sink indices of paths in
components
\begin{equation}\label{eq:component}
  (\lambda_1 + \cdots + \lambda_{k-1}+1), \dotsc, (\lambda_1 + \cdots + \lambda_k)
\end{equation}
of $U(\pi,u,yu)$ to be a permutation of the source indices of the same paths.

Following the outline of \cite[\S 5]{KLSBasesQMBIndSgn},
we will apply a sign-reversing involution
to the tableaux in each set $\mathcal U_I$ by
defining an involution on a second set
$\mathcal T_I$ of tableaux, in obvious bijection with $\mathcal U_I$.
For a wiring diagram $G$
let $\mathcal T_I = \mathcal T_I(G)$ be 
the set of all column-closed, left column-strict $G$-tableaux 
$W$ of shape $\lambda^\tr$
such that $L(W^\tr)_k = I_k$ (as sets) for $k = 1,\dotsc,r$.
Define the bijection
\begin{equation}\label{eq:delta}
  \begin{aligned}
    \delta = \delta_I\ntksp: \mathcal U_I &\rightarrow \mathcal T_I\\
    U &\mapsto W,
  \end{aligned}
\end{equation}
as in \cite[\S 5]{KLSBasesQMBIndSgn},
by letting $W$ be the
left column-strict $G$-tableau of shape $\lambda^\tr$
whose $k$th column
consists of entries
(\ref{eq:component})
of $U$.

Since $U \in \mathcal U_I$ and $\delta(U) \in \mathcal T_I$
contain the same path family, it is easy to see
that $\delta$ does not affect the statistic $\cross$.
On the other hand, it changes the statistic $\incross$ in a very simple way,
as in \cite[Lem.\,5.1]{KLSBasesQMBIndSgn}.
Define $\cdncross(U)$ to be the number of defective noncrossings of pairs of 
paths appearing in the same column of $U$.

\begin{lem}\label{l:TU}
  Let $I$ be an ordered set partition.
  For $U \in \mathcal U_I$ we have
\begin{equation}\label{eq:incdn}
\incross(U) = \incross(\delta(U)) + \cdncross(\delta(U)).
\end{equation}
\end{lem}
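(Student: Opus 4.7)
The plan is to partition the noncrossings of $\pi$ into two classes according to whether the two paths involved lie in the same column of $W = \delta(U)$ or in different columns, and to verify that the contributions to the two sides of (\ref{eq:incdn}) agree class-by-class.

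First I recall the structure of $U$ and $W$. The tableau $U = U(\pi,u,yu)$ consists of a single row of paths $\pi_{u_1}, \pi_{u_2}, \ldots, \pi_{u_n}$. Because $u = u(I) \in \slambdamin$, the source indices $u_s$ within each of the $r$ blocks (\ref{eq:component}) are strictly increasing, and by construction $\delta$ places the paths from the $k$-th block into the $k$-th column of $W$, preserving their order.

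Now fix a noncrossing $(\pi_i, \pi_j, p)$ with $\pi_i$ below $\pi_j$ at the central vertex of $G_{J_p}$. Suppose first that $\pi_i$ and $\pi_j$ lie in different columns of $W$. Then they lie in different components of $U$, and since the components of $U$ occur left-to-right in the same order as the columns of $W$, the condition that $\pi_j$ lies in an earlier column of $U$ than $\pi_i$ is equivalent to the condition that $\pi_j$ lies in an earlier column of $W$ than $\pi_i$. Hence the triple contributes the same amount (either $0$ or $1$) to $\incross(U)$ and to $\incross(W)$, and contributes $0$ to $\cdncross(W)$.

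Suppose instead that $\pi_i$ and $\pi_j$ lie in the same column of $W$. They then lie in the same component of $U$, so the triple contributes $0$ to $\incross(W)$, and it remains to show that its contributions to $\incross(U)$ and to $\cdncross(W)$ are equal. Within a single component of $U$ the paths appear left-to-right in increasing order of source index, so $\pi_j$ precedes $\pi_i$ in $U$ if and only if $j < i$. A parity-of-prior-crossings observation (the same one implicit in the proof of Corollary~\ref{c:sigmastats}: starting at source $j$ the path $\pi_j$ is initially above $\pi_i$ exactly when $j > i$, and it reaches the central vertex of $G_{J_p}$ above $\pi_i$ iff it has crossed $\pi_i$ an even number of times since) shows that, given $\pi_i$ lies below $\pi_j$ at the central vertex of $G_{J_p}$, the condition $j < i$ holds if and only if the noncrossing is defective. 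Adding contributions over all noncrossings yields (\ref{eq:incdn}). The argument is mostly bookkeeping; the only nontrivial point is the parity observation in the same-column case, which carries over without change from the adjacent-transposition setting to intervals $J_p$ of arbitrary size.
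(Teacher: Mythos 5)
Your proof is correct, and it fills in the argument that the paper itself only outlines by reference to \cite[Lem.\,5.1]{KLSBasesQMBIndSgn} (the paper's proof is the single sentence: ``follow the proof of [that lemma], merely replacing each noncrossing index with a noncrossing triple''). The bookkeeping is right: since $U$ has shape $(n)$, every cell is its own column, so an inverted noncrossing of $U$ is simply a noncrossing where $\pi_j$ precedes $\pi_i$; the map $\delta$ packs consecutive blocks of $U$ into columns of $W$ without reordering the blocks, so the different-column noncrossings contribute identically to $\incross(U)$ and $\incross(\delta(U))$; and inside a block the paths are sorted by source index (because $u = u(I) \in \slambdamin$), which reduces the same-column case to the parity statement you identify. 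That parity statement is precisely the fact that a noncrossing $(\pi_i,\pi_j,p)$ with $\pi_i$ below $\pi_j$ at $G_{J_p}$ is defective if and only if $j < i$, since the relative vertical order of $\pi_i$ and $\pi_j$ flips at each crossing and starts out $\pi_i$-below-$\pi_j$ iff $i < j$. You are also right that this is the only nontrivial point and that nothing about it is sensitive to the size of $J_p$: the reasoning only uses that crossings occur at central vertices and that each one flips the relative order, which holds for arbitrary intervals.
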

\begin{proof}
  Follow the proof of \cite[Lem.\,5.1]{KLSBasesQMBIndSgn},
  merely replacing each noncrossing {\em index} as defined in
  \cite{KLSBasesQMBIndSgn})
  with a noncrossing {\em triple} as defined
  preceding (\ref{eq:cross}).
\end{proof}

Now we define the involution
\begin{equation}\label{eq:zeta}
 \zeta = \zeta_I\ntksp: \mathcal T_I \rightarrow \mathcal T_I
\end{equation}
(different from $\zeta$ in \cite{KLSBasesQMBIndSgn})
as follows.  Let
$G = G_{J_1} \circ \cdots \circ G_{J_m}$,
$u = u(I)$, and
$\lambda = \type(I)$. Fix $y \in \slambda$
and let
$W = \delta(U(\pi,u,yu))$.
\begin{enumerate}
\item If $W$ is column-strict,
  then define $\zeta(W) = W$.
\item Otherwise, 
  \begin{enumerate}
  \item Let $t$ be the greatest index such that
    column $t$ of $W$ is not column-strict.
  \item Let $p$ be the greatest index such that (at least) two paths
    in column $t$ of $W$ pass through
    the interior vertex of $G_{J_p}$.
  \item Let $j$, $j'$ be the two positions in the interval
    $[\lambda_1 + \cdots + \lambda_{t-1} + 1, \lambda_1 + \cdots \lambda_t]$
    for which paths $\pi_{u_j}$, $\pi_{u_{j'}}$ have the 
    right-to-left lexicographically greatest pair $((yu)_j,(yu)_{j'})$
    of sink indices. 
  \item Let $\hat \pi = (\hat \pi_1, \dotsc, \hat \pi_n)$ be the path family
    obtained from $\pi$ by swapping the terminal subpaths of
    $\pi_{u_j}$ and $\pi_{u_{j'}}$, beginning at the interior vertex of $G_{J_p}$.
    ($\hat \pi_i = \pi_i$ for $i \notin \{u_j,u_{j'}\}.$)
\item
  Define $\zeta(W)$ to be the tableau obtained from $W$ by replacing
$\pi_i$ by $\hat \pi_i$, for $i = 1,\dotsc,n$.
  \end{enumerate}
\end{enumerate}

If $W \in \mathcal T_I$ is not a fixed point of $\zeta$, then the tableaux
$U, \widehat U \in \mathcal U_I$ satisfying
$\delta(U) = W$, $\delta(\widehat U) = \zeta(W)$
are closely related.

\begin{lem}\label{l:lengthdiff}
  Let $I = (I_1, \dotsc I_r)$ be an ordered set partition of type $\lambda$,
  and define $u = u(I)$.
  Let $G$-tableaux $W \in \mathcal T_I$ and $U, \widehat U \in \mathcal U_I$
  satisfy $W = \delta(U) \neq \zeta(W) = \delta(\widehat U)$,
  and define path families $\pi$, $\hat \pi$ as above.
Then for some generator $s \in \slambda$ and
some permutations
$y, \hat y = ys \in \slambda$
we have
$U = U(\pi, u, yu)$, $\widehat U = U(\hat \pi, u, \hat y u)$.
\end{lem}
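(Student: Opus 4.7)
The existence of $y, \hat y \in \slambda$ with $U = U(\pi, u, yu)$ and $\widehat U = U(\hat\pi, u, \hat y u)$ is immediate from the definition of $\mathcal U_I$ together with $u = u(I)$ and $U, \widehat U \in \mathcal U_I$, so only the identity $\hat y = ys$ for a simple generator $s \in \slambda$ requires proof.

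First I would read off the effect of step~(4) in the construction of $\zeta$: since $\hat\pi$ agrees with $\pi$ on every component other than $\pi_{u_j}$ and $\pi_{u_{j'}}$, whose terminal subpaths starting from the interior vertex of $G_{J_p}$ have been exchanged, $\hat\pi_{u_j}$ terminates at the old endpoint of $\pi_{u_{j'}}$, namely $(yu)_{j'}$, while $\hat\pi_{u_{j'}}$ terminates at $(yu)_j$, and every other $\hat\pi_{u_a}$ still terminates at $(yu)_a$. Hence $R(\widehat U)$ is obtained from $R(U) = yu$ by exchanging the entries in positions $j$ and $j'$.

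Translating this positional transposition into the symmetric group algebra via the convention $(yu)_a = u_{y(a)}$ yields $R(\widehat U) = (ys) u$ for a transposition $s$ encoding the swap. Because step~(3) of $\zeta$ places both $j$ and $j'$ in the interval $[\lambda_1 + \cdots + \lambda_{t-1}+1,\, \lambda_1 + \cdots + \lambda_t]$, and $y \in \slambda$ preserves this interval, the transposition $s$ lies in the factor $\mfs{[\lambda_1 + \cdots + \lambda_{t-1}+1,\, \lambda_1 + \cdots + \lambda_t]}$ of $\slambda$, so $\hat y = ys \in \slambda$; this already yields the relation with $s$ a reflection in $\slambda$.

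The hard part will be to promote $s$ from a general reflection to a simple Coxeter generator of $\slambda$, equivalently to show that the two exchanged positions are consecutive within their block. Here the extremality built into step~(3) of $\zeta$ -- the right-to-left lexicographically greatest sink pair among paths passing through the interior vertex of $G_{J_p}$, together with the maximality of $p$ among indices admitting at least two such paths in column $t$ -- must be carefully exploited. I expect the argument to proceed by contradiction: any interleaving path in column $t$ strictly between the two chosen ones would either force a later factor $G_{J_{p'}}$ to admit at least two interior traversals in column $t$ (contradicting the maximality of $p$), or yield a sink pair strictly greater in the prescribed right-to-left lexicographic order (contradicting the extremality of $(j, j')$). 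This combinatorial case analysis, considerably more delicate than the $|J_p| = 2$ setting of \cite[\S 5]{KLSBasesQMBIndSgn} in which adjacency holds for trivial reasons, is the technical core of the lemma.
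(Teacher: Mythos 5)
Your overall strategy matches the paper's: translate the $\zeta$-swap of sinks into $\hat y = ys$ for a transposition $s$ lying in the $t$-th factor of $\slambda$, then show $s$ is adjacent using the extremal choices in the definition of $\zeta$. But you explicitly flag the crucial step --- showing that the two sink indices $u_i = (yu)_j$ and $u_{i'} = (yu)_{j'}$ are consecutive in the increasing source sequence~(\ref{eq:incsourceseq}), equivalently $|i-i'|=1$ --- as ``the technical core of the lemma'' and leave it as a plan (``I expect the argument to proceed by contradiction''). Since that is precisely the substance of the statement, the proposal is not yet a proof.

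For the record, the missing argument is short, and your plan, made precise, does deliver it. Since $p$ is the greatest index at which two column-$t$ paths meet an interior vertex, no two column-$t$ paths of $\pi$ intersect to the right of $G_{J_p}$; consequently the vertical order of the column-$t$ paths just after $G_{J_p}$ agrees with the order of their sink indices. If some column-$t$ path $\pi_{u_a}$ had a sink index strictly between $u_i$ and $u_{i'}$, it would therefore lie between $\pi_{u_j}$ and $\pi_{u_{j'}}$ just after $G_{J_p}$ and hence exit the interior vertex of $G_{J_p}$; but then the pair $(\pi_{u_a}, \pi_{u_{j'}})$ would have a right-to-left lexicographically greater sink pair than $(\pi_{u_j}, \pi_{u_{j'}})$, contradicting the choice in step~(3) of $\zeta$. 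Since the column-$t$ sink indices are a permutation of the increasing source sequence~(\ref{eq:incsourceseq}), the absence of any sink index strictly between $u_i$ and $u_{i'}$ forces $|i-i'|=1$, whence $s = s_{\min\{i,i'\}}$.
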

\begin{proof}
  The tableaux $U$, $\widehat U$ contain the same path
  families as $W$ and $\zeta(W)$,
  respectively,
  and these path families are $\pi$, $\hat \pi$, as defined in
  the definition of $\zeta$.

  Listing the source indices of paths in column $t$ of $W$, from bottom to top,
  we obtain the increasing sequence
  \begin{equation}\label{eq:incsourceseq}
    (u_{\lambda_1 + \cdots + \lambda_{t-1}+1}, u_{\lambda_1 + \cdots + \lambda_{t-1}+2}, \dotsc,
    u_{\lambda_1 + \cdots + \lambda_{t}}).
    \end{equation}
  Since elements of $\mathcal T_I$ are column-closed, the sink indices of the
  paths form a permutation of this sequence,
  \begin{equation*}\label{eq:sinkseq}
((yu)_{\lambda_1 + \cdots + \lambda_{t-1}+1}, (yu)_{\lambda_1 + \cdots + \lambda_{t-1}+2}, \dotsc,
    (yu)_{\lambda_1 + \cdots + \lambda_{t}}),
  \end{equation*}
  for some $y \in \slambda$.
  In particular, the sink indices $(yu)_j$ and $(yu)_{j'}$ of
  $\pi_{u_j}$, $\pi_{u_{j'}}$ are simply
  two components $u_i$ and $u_{i'}$ of (\ref{eq:incsourceseq}).

  By our choice of $p$, no two paths in column $t$ of $W$ intersect to
  the right of $G_{J_p}$.  By our choice of $j$ and $j'$, no path in this column
  has a sink index strictly between $u_i$ and $u_{i'}$. It follows that
  $|i - i'| = 1$.  Thus the sink indices of the path family $\hat \pi$ are
  given by $\hat yu = ysu$, where $s$ is the adjacent transposition
  $s_{\min\{i,i'\}} \in \slambda$.
%
\end{proof}


Furthermore, when $W \in \mathcal T_I$ is not a fixed point of $\zeta$,
the values of the statistics $\cross$, $\incross$ and $\cdncross$ on
$W$ and $\zeta(W)$ are closely related, as are $\ell(y)$ and $\ell(\hat y)$,
where $y$, $\hat y$ are defined as in Lemma~\ref{l:lengthdiff}.

\begin{lem}\label{l:statids}
  Let $W = U(\pi,u,yu) \neq \zeta(W) = U(\hat \pi, u, \hat yu)$ be as above
  with $\hat \pi$ being defined by
  the intersection of $\pi_{u_j}$ and $\pi_{u_{j'}}$
  in $G_{J_p}$.  Let $b$ be the number of paths in $\pi$
  (equivalently, in $\hat \pi$) which enter $G_{J_p}$ between the paths
  $\pi_{u_j}$ and $\pi_{u_{j'}}$, and which leave $G_{J_p}$ between
  the same two paths.
  Then we have
    \begin{gather}
      \ell(\hat y) = \begin{cases}
        \ell(y) - 1 &\text{if $(\pi_{u_j}, \pi_{u_{j'}}, p)$ is a proper crossing or defective noncrossing}, \\
        \ell(y) + 1 &\text{if $(\pi_{u_j}, \pi_{u_{j'}}, p)$ is a proper noncrossing or defective crossing};
        \end{cases}\label{eq:yid}\\
      \cross(\hat \pi) = \begin{cases}
        \cross(\pi) - 1 - 2b &\text{if $(\pi_{u_j}, \pi_{u_{j'}}, p)$ is a crossing},\\
        \cross(\pi) + 1 + 2b &\text{otherwise};
      \end{cases}\label{eq:crossid}\\
      \incross(\zeta(W)) = \begin{cases}
        \incross(W) + b &\text{if $(\pi_{u_j}, \pi_{u_{j'}}, p)$ is a crossing},\\
        \incross(W) - b &\text{otherwise};
      \end{cases}\label{eq:incrossid}\\
      \cdncross(\hat \pi) = \begin{cases}
        \cdncross(\pi) &\text{if $(\pi_{u_j}, \pi_{u_{j'}}, p)$ is proper},\\
        \cdncross(\pi) - 1 &\text{if $(\pi_{u_j}, \pi_{u_{j'}}, p)$ is a defective noncrossing},\\
        \cdncross(\pi) + 1 &\text{if $(\pi_{u_j}, \pi_{u_{j'}}, p)$ is a defective crossing}.\label{eq:cdncrossid}
      \end{cases}
    \end{gather}
\end{lem}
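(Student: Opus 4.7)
The plan is to verify the four identities by decomposing each statistic as a sum of contributions indexed by triples $(\pi_a, \pi_b, p')$, and to track how each contribution changes from $\pi$ to $\hat\pi$ (respectively, from $W$ to $\zeta(W)$). Only triples involving $\pi_{u_j}$ or $\pi_{u_{j'}}$ and/or lying at $p'=p$ are affected, so the bulk of the work concerns these special triples.

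For (\ref{eq:yid}), Lemma~\ref{l:lengthdiff} gives $\hat y = ys$ for an adjacent transposition $s \in \slambda$, and since $u \in \slambdamin$ we have $\ell(\hat y) - \ell(y) = \ell(\hat y u) - \ell(yu)$, which equals $\pm 1$ according to whether swapping positions $j$ and $j'$ in $yu$ creates or destroys an inversion between $(yu)_j$ and $(yu)_{j'}$. By the maximality of $p$, no two paths in column $t$ intersect within $G_{J_{p+1}} \circ \cdots \circ G_{J_m}$, so the vertical order of $\pi_{u_j}$ and $\pi_{u_{j'}}$ at their respective sinks equals their order immediately after $G_{J_p}$. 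This order is determined by the parity of their crossings before $G_{J_p}$ combined with whether they cross at $G_{J_p}$; direct inspection of the four cases yields the claimed sign.

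For (\ref{eq:crossid}), triples $(\pi_a,\pi_b,p')$ with $\{a,b\}\cap\{u_j,u_{j'}\}=\emptyset$ are unchanged; the pair $(\pi_{u_j},\pi_{u_{j'}},p)$ flips between crossing and noncrossing, contributing $\mp 1$; for $p'<p$ the relevant subpaths are unchanged, and for $p'>p$ the two paths do not simultaneously pass through $G_{J_{p'}}$ by maximality, while for any third path $\pi_k$ the total crossing count of $(\pi_{u_j},\pi_k,p')$ and $(\pi_{u_{j'}},\pi_k,p')$ is preserved by the swap at $G_{J_p}$. At $p'=p$ and for a third path $\pi_k$ through the central vertex, an enumeration by entry and exit heights $(e_k,f_k)$ relative to those of $\pi_{u_j}$ and $\pi_{u_{j'}}$ shows that only the $b$ paths entering and exiting between the two distinguished paths affect the count, each contributing $\pm 2$. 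Combining yields $\pm(1+2b)$.

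For (\ref{eq:incrossid}) the key additional ingredient is that $\pi_{u_j}$ and $\pi_{u_{j'}}$ occupy the same column $t$ of both $W$ and $\zeta(W)$, so the pair $(\pi_{u_j},\pi_{u_{j'}},p)$ never contributes to $\incross$, and the cancellations from (\ref{eq:crossid}) carry over at $p'\neq p$ since column comparisons with either of the two paths are identical. At $p'=p$, the right-to-left lexicographically greatest choice of $(j,j')$ combined with maximality of $p$ forces any other path in column $t$ passing through $G_{J_p}$'s central vertex to exit outside the interval between $f_{u_j}$ and $f_{u_{j'}}$, so the $b$ middle paths all lie in columns $\neq t$; for each, exactly one of the two relevant noncrossings in $W$ or $\zeta(W)$ is inverted, yielding a total change of $\pm b$. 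For (\ref{eq:cdncrossid}), only same-column pairs matter for $\cdncross$, and the same structural observation confines all changes to the pair $(\pi_{u_j},\pi_{u_{j'}},p)$; inspection of the four proper/defective and crossing/noncrossing cases produces the four stated values. The main obstacle throughout is the coordinate bookkeeping at $p'=p$ for (\ref{eq:crossid}) and (\ref{eq:incrossid}), together with the verification that the structural choice of $(j,j')$ and the maximality of $p$ rule out any nontrivial contribution from column-$t$ third paths, which is what keeps the final counts clean.
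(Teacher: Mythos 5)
Your proposal is essentially the paper's own argument: you verify (\ref{eq:yid}) via Lemma~\ref{l:lengthdiff} and the factorization $\ell(yu)=\ell(y)+\ell(u)$, and you establish (\ref{eq:crossid})--(\ref{eq:cdncrossid}) by decomposing the statistics over triples, observing that contributions at $p'<p$ are unchanged, that tail-swapping gives bijective correspondences at $p'>p$ (with column comparisons preserved because $\pi_{u_j}$ and $\pi_{u_{j'}}$ share column $t$), and at $p'=p$ classifying third paths by entry and exit position relative to the two distinguished paths --- which is precisely the paper's nine-class decomposition $AD,\dotsc,CF$, with your ``$b$ middle paths'' being class $BE$. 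The only difference is level of explicitness: the paper enumerates the nine classes and checks each (e.g.\ that for $AE\cup BD\cup BF\cup CE$ the single noncrossing in each of $\pi$ and $\hat\pi$ has the same inverted status, so those contributions cancel), whereas you assert that only the $b$ class-$BE$ paths matter; spelling out that casework would be needed to turn the sketch into a complete proof, but the route is the same.
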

\begin{proof}
(Identity (\ref{eq:yid})) 
  Recall that we have
  \begin{equation*}
    \begin{gathered}
      \text{source}(\pi_{u_j}) = \text{source}(\hat \pi_{u_j}) = u_j,\\
      \text{source}(\pi_{u_{j'}}) = \text{source}(\hat \pi_{u_{j'}}) = u_{j'},
    \end{gathered} \qquad 
    \begin{aligned}    
      &\text{sink}(\pi_{u_j}) = \text{sink}(\hat \pi_{u_{j'}}) = (yu)_j,\\
      &\text{sink}(\pi_{u_{j'}}) = \text{sink}(\hat \pi_{u_j}) = (yu)_{j'} > (yu)_j.
    \end{aligned}
  \end{equation*}
By Lemma~\ref{l:lengthdiff}, we have
$y, \hat y = ys \in \slambda$ and therefore $\ell(\hat y) = \ell(y) \pm 1$.
Since $u \in \slambdamin$, we also have
\begin{equation}\label{eq:yu}
  \ell(yu) = \ell(y) + \ell(u), \qquad \ell(\hat yu) = \ell(\hat y) + \ell(u).
  \end{equation}
If $(\pi_{u_j}, \pi_{u_{j'}}, p)$ is a proper crossing or a defective noncrossing,
then the relative orders of $(u_j, u_{j'})$ and $( (yu)_j, (yu)_{j'} )$
are different, i.e., $u_{j'} < u_j$.  By the definition of $j, j'$ and
(\ref{eq:incsourceseq}) we also have $j' < j$.
Thus the pair of letters $( (yu)_j, (yu)_{j'} )$ is inverted in the word
$yu$ and not in the word $\hat yu$.  It follows that $\ell(yu) > \ell(\hat yu)$
and by (\ref{eq:yu}) that $\ell(y) > \ell(\hat y) = \ell(y) - 1$.
The other case of (\ref{eq:yid}) is similar.  

To justify the identities (\ref{eq:crossid}) -- (\ref{eq:cdncrossid}),
we first look closely at paths
$\pi_i = \hat \pi_i$, $i \not \in \{j, j'\}$, which pass
through the interior vertex of $G_{J_p}$.  We partition such paths into nine
equivalence classes according to whether they enter $G_{J_p}$ below
the both of the paths $\pi_{u_j}$, $\pi_{u_{j'}}$ (equivalently, below both
$\hat \pi_{u_j}$, $\hat \pi_{u_{j'}}$) 
between the two paths, or above both paths, and whether they leave $G_{J_p}$
below, between, or above the two paths.  We name the nine classes
$AD, AE, AF, BD, BE, BF, CD, CE, CF$, according to the (magnified) diagrams
of paths intersecting in $G_{J_p}$
	\begin{equation}
	\begin{tikzpicture}[scale=0.7,baseline=-70]
	\draw[dotted] (0,0) -- (5,-6);
	\draw[dashed,ultra thick] (0,-2) -- (5,-4) ;
	\draw[-,ultra thick] (0,-4) -- (5,-2) ;
	\draw[dotted] (0,-6) -- (5,0);
	\draw[-,fill] (2.5,-3) circle (5pt);
	\node at (0.5,-1.5) {$C$};
	\node at (0.5,-1.5) {$C$};
	\node at (4.5,-1.5) {$F$};
	\node at (0.5,-3) {$B$};
	\node at (4.5,-3) {$E$};
	\node at (0.5,-4.5) {$A$};
	\node at (4.5,-4.5) {$D$};
	\end{tikzpicture}\; \overset{\large\zeta}{\longleftrightarrow}
	\begin{tikzpicture}[scale=0.7,baseline=-70]
\draw[dotted] (0,0) -- (5,-6);
\draw[dashed,ultra thick] (0,-2) -- (2.5,-3) -- (5,-2) ;
\draw[-,ultra thick] (0,-4) -- (2.5,-3) -- (5,-4) ;
\draw[dotted] (0,-6) -- (5,0);
\draw[-,fill] (2.5,-3) circle (5pt);
\node at (0.5,-1.5) {$C$};
\node at (0.5,-1.5) {$C$};
\node at (4.5,-1.5) {$F$};
\node at (0.5,-3) {$B$};
	\node at (4.5,-3) {$E$};
	\node at (0.5,-4.5) {$A$};
	\node at (4.5,-4.5) {$D$};
	\end{tikzpicture}\;.
	\end{equation}
The solid and dashed lines represent edges from
$\pi_j$, $\pi_{j'}$, $\hat\pi_j$ and $\hat\pi_{j'}$,
and the dotted lines the edges with highest and lowest sink
or source indices among edges incident with the central vertex of $G_{J_p}$.
Every other edge meeting this vertex lies in one of the six labeled regions. 
We may have $\pi_{u_j}$, $\pi_{u_{j'}}$ on the left, with
$\hat \pi_{u_j}$, $\hat \pi_{u_{j'}}$ on the right, or vice versa.
We may have the paths indexed by $u_j$ entering $G_{J_p}$
above the others in both diagrams, or below the others in both diagrams.
All four combinations are possible.
Thus, paths in class $AE$ enter $G_{J_p}$ below the two bold paths and leave
$G_{J_p}$ between them, while the class $BE$ has cardinality $b$
as defined in the statement of the lemma.

\smallskip
(Identity (\ref{eq:crossid}))
Consider the contributions of all points of intersection
$(\pi_i, \pi_{i'}, k)$
and $(\hat \pi_i, \hat \pi_{i'}, k)$ to $\cross(\pi)$ and $\cross(\hat \pi)$,
respectively.
For $k < p$ and all $i, i'$, we have
that
$(\pi_i, \pi_{i'}, k)$ is a crossing if and only if
$(\hat \pi_i, \hat \pi_{i'}, k)$ is a crossing.
The same is true for $k \geq p$,
provided that $i, i' \not \in \{ u_j, u_{j'} \}$.
For $k > p$ and $i \notin \{ u_j, u_{j'} \}$,
crossings $(\pi_i, \pi_{u_j}, k)$ and $(\hat \pi_i, \hat \pi_{u_{j'}}, k)$ 
correspond bijectively, as do
crossings $(\pi_i, \pi_{u_{j'}}, k)$ and $(\hat \pi_i, \hat \pi_{u_j}, k)$. 
By the definition of $j, j'$, there are no crossings of the form
$(\pi_{u_j}, \pi_{u_{j'}}, k)$ or
$(\hat \pi_{u_j}, \hat \pi_{u_{j'}}, k)$ for $k > p$.
So far, contributions to $\cross(\pi)$ and $\cross(\hat \pi)$ are equal.

Now consider points of intersection of the form
\begin{equation}\label{eq:triples}
  (\pi_i, \pi_{u_j}, p), \qquad (\pi_i, \pi_{u_{j'}}, p),
\end{equation}
for $i \not \in \{ u_j, u_{j'} \}$,
and their images under $\zeta$,
\begin{equation}\label{eq:hattriples}
  (\hat \pi_i, \hat \pi_{u_j}, p), \qquad (\hat \pi_i, \hat \pi_{u_{j'}}, p).
\end{equation}
If $\pi_i \in AF \cup CD$, then all four triples are crossings;
if $\pi_i \in AD \cup CF$, then none is.
If $\pi_i \in AE \cup BD \cup BF \cup CE$, then
exactly one of the triples (\ref{eq:triples}) is a crossing, as is
exactly one of the triples (\ref{eq:hattriples}).
On the other hand, if $\pi_i$ is one of the $b$ paths in $BE$, then
either
both triples (\ref{eq:triples}) are crossings
while both triples (\ref{eq:hattriples}) are noncrossings
(if $(\pi_{u_j}, \pi_{u_{j'}}, p)$ is a crossing),
or 
both triples (\ref{eq:triples}) are noncrossings
while both triples (\ref{eq:hattriples}) are crossings
(if $(\pi_{u_j}, \pi_{u_{j'}}, p)$ is a noncrossing).
Finally
by the definition of $\zeta$, exactly one of the two triples
\begin{equation}\label{eq:piuj}
(\pi_{u_j}, \pi_{u_{j'}}, p), \qquad (\hat \pi_{u_j}, \hat \pi_{u_{j'}}, p)
\end{equation}
is a crossing and the other is a noncrossing.
Thus the points of intersection of the forms
(\ref{eq:triples}), (\ref{eq:hattriples}), (\ref{eq:piuj})
contribute a surplus of $2b + 1$ to $\cross(\pi)$
if $(\pi_{u_j}, \pi_{u_{j'}}, p)$ is a crossing,
and to $\cross(\hat \pi)$ otherwise.

\smallskip
(Identity (\ref{eq:incrossid}))
For $k < p$ and all $i, i'$, we have
that
$(\pi_i, \pi_{i'}, k)$ is an inverted noncrossing if and only if
$(\hat \pi_i, \hat \pi_{i'}, k)$ is an inverted noncrossing.
The same is true for $k \geq p$,
provided that $i, i' \not \in \{ u_j, u_{j'} \}$.
For $k > p$ and $i \notin \{ u_j, u_{j'} \}$,
noncrossings $(\pi_i, \pi_{u_j}, k)$ and $(\hat \pi_i, \hat \pi_{u_{j'}}, k)$ 
correspond bijectively, as do
noncrossings $(\pi_i, \pi_{u_{j'}}, k)$ and $(\hat \pi_i, \hat \pi_{u_j}, k)$. 
Moreover, since paths indexed $u_j$, $u_{j'}$ all appear in the same column $t$,
this correspondence preserves inversion of noncrossings. 
By the definition of $j, j'$, there are no noncrossings of the form
$(\pi_{u_j}, \pi_{u_{j'}}, k)$ or
$(\hat \pi_{u_j}, \hat \pi_{u_{j'}}, k)$ for $k > p$.
So far, contributions to $\incross(\pi)$ and $\incross(\hat \pi)$ are equal.

Now consider points of intersection of the forms
(\ref{eq:triples}), (\ref{eq:hattriples}) for $i \notin \{ u_j, u_{j'} \}$.
If $\pi_i \in AF \cup CD$, then all four triples are crossings;
if $\pi_i \in AD \cup CF$, then all four are noncrossing.
Moreover, if we have $\pi_i \in AD$ appearing to the left of column $t$
or $\pi_i \in CF$ appearing to the right of column $t$, then
all four noncrossings are inverted.  Otherwise none of the four is inverted.
If $\pi_i \in AE \cup BD \cup BF \cup CE$, then
exactly one of the triples (\ref{eq:triples}) is a noncrossing, as is
exactly one of the triples (\ref{eq:hattriples}).
These two noncrossings are inverted if we have
$\pi_i \in AE \cup BD$ appearing to the right of column $t$,
or if we have
$\pi_i \in BF \cup CE$ appearing to the left of column $t$.
Otherwise, the two noncrossings are not inverted.
If $\pi_i \in BE$, then either
both triples (\ref{eq:triples}) are crossings
while both triples (\ref{eq:hattriples}) are not
(if $(\pi_{u_j}, \pi_{u_{j'}}, p)$ is a crossing),
or vice versa
(if $(\pi_{u_j}, \pi_{u_{j'}}, p)$ is a noncrossing).
In both cases, exactly one of the two noncrossings is inverted.
Finally, observe that neither triple in
(\ref{eq:piuj}) can be an inverted noncrossing, since all four paths
appear in column $t$.
Thus the points of intersection of the forms
(\ref{eq:triples}), (\ref{eq:hattriples}), (\ref{eq:piuj})
contribute a surplus of $b$ to $\incross(\pi)$
if $(\pi_{u_j}, \pi_{u_{j'}}, p)$ is a crossing,
and to $\cross(\hat \pi)$ otherwise.

\smallskip
(Identity (\ref{eq:cdncrossid}))
For $k < p$ and all $i, i'$, we have
that
$(\pi_i, \pi_{i'}, k)$ is a column defective noncrossing
if and only if
$(\hat \pi_i, \hat \pi_{i'}, k)$ is a column defective noncrossing.
The same is true for $k \geq p$,
provided that $i, i' \not \in \{ u_j, u_{j'} \}$.
For $k > p$ and $i \notin \{ u_j, u_{j'} \}$,
no triple $(\pi_i, \pi_{u_j}, k)$, $(\pi_i, \pi_{u_{j'}}, k)$,
$(\hat \pi_i, \hat \pi_{u_j}, k)$, $(\hat \pi_i, \hat \pi_{u_{j'}}, k)$ 
can be a column defective noncrossing, since the definition of $j, j'$
guarantees that no path $\pi_i$ can belong to column $t$ and intersect 
paths indexed by $u_j$, $u_{j'}$ to the right of $G_{J_p}$.
So far, contributions to $\cdncross(\pi)$ and $\cdncross(\hat \pi)$ are equal.

Now consider points of intersection of the forms
(\ref{eq:triples}), (\ref{eq:hattriples}) for $i \notin \{ u_j, u_{j'} \}$.
If $\pi_i \in AF \cup BF \cup CF \cup AE \cup BE \cup CE$,
then it cannot appear in column $t$ by our choice of $j, j'$.
If $\pi_i \in CD$, then all four triples are crossings.
If $\pi_i \in AD \cup BD$, then
$(\pi_i, \pi_{u_j}, p)$
is a column defective noncrossing if and only if
$(\hat \pi_i, \hat \pi_{u_j}, p)$ is,
and
$(\pi_i, \pi_{u_{j'}}, p)$
is a column defective noncrossing if and only if
$(\hat \pi_i, \hat \pi_{u_{j'}}, p)$ is.
Finally, observe that neither triple in
(\ref{eq:piuj}) can be a column defective noncrossing if
$(\pi_{u_j}, \pi_{u_{j'}}, p)$ is proper,
while exactly one of the two a column defective noncrossing if
$(\pi_{u_j}, \pi_{u_{j'}}, p)$ is defective.
Thus the points of intersection of the forms (\ref{eq:piuj})
contribute a surplus of
$1$ to $\cdncross(\pi)$
if $(\pi_{u_j}, \pi_{u_{j'}}, p)$ is a defective noncrossing,
$1$ to $\cdncross(\hat \pi)$
if $(\pi_{u_j}, \pi_{u_{j'}}, p)$ is a defective crossing,
and $0$ to both otherwise.
\end{proof}

As a consequence, we have that the map $\zeta$ preserves
a certain linear combination of the above statistics.
\begin{cor}\label{c:adamidentity}
  For $W = \delta(U(\pi, u, yu))$ and $\zeta(W) = \delta(U(\hat \pi, u, \hat yu))$ in $\mathcal T_I$, we have
  \begin{equation}\label{eq:adamidentity}
    \frac{\cross(\hat \pi)}2 + \cdncross(\zeta(W)) + \incross(\zeta(W))
    - \frac{\ell(\hat y)}2
    = \frac{\cross(\pi)}2 + \cdncross(W) + \incross(W) - \frac{\ell(y)}2.
  \end{equation}
  \end{cor}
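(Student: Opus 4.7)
The plan is to deduce the identity (\ref{eq:adamidentity}) directly from the four identities collected in Lemma~\ref{l:statids} by a simple case analysis. Rewriting the claim as
\begin{equation*}
  \Delta \defeq \frac{\cross(\hat \pi) - \cross(\pi)}2
  + (\cdncross(\zeta(W)) - \cdncross(W))
  + (\incross(\zeta(W)) - \incross(W))
  - \frac{\ell(\hat y) - \ell(y)}2 = 0,
\end{equation*}
the strategy is to split into the four mutually exclusive cases for the type of the triple $(\pi_{u_j}, \pi_{u_{j'}}, p)$ that are distinguished in Lemma~\ref{l:statids}: proper crossing, proper noncrossing, defective noncrossing, defective crossing. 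In each case I would substitute the four differences provided by (\ref{eq:yid})--(\ref{eq:cdncrossid}) and check that the terms involving $b$ and the constant terms both cancel.

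The mechanism of cancellation is transparent: the identity (\ref{eq:incrossid}) always contributes $\pm b$, while (\ref{eq:crossid}) always contributes $\pm(1+2b)/2$ after division by $2$, and the two halves of the $b$-contribution cancel regardless of which case we are in. The constants $\pm 1/2$ from (\ref{eq:crossid}) then combine with the $\mp 1/2$ from (\ref{eq:yid}) in the proper cases, while in the defective cases the additional $\pm 1$ from (\ref{eq:cdncrossid}) pairs up with a $\pm 1/2 \mp 1/2$ adjustment from (\ref{eq:crossid}) and (\ref{eq:yid}) to cancel.

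For concreteness, in the proper crossing case the four differences are $-(1+2b)$, $+b$, $0$, $-1$, yielding $\Delta = -\tfrac12 - b + b + \tfrac12 = 0$; and one checks symmetrically in the proper noncrossing case ($+(1+2b)$, $-b$, $0$, $+1$), the defective noncrossing case ($+(1+2b)$, $-b$, $-1$, $-1$), and the defective crossing case ($-(1+2b)$, $+b$, $+1$, $+1$). There is no genuine obstacle here; the only thing to be careful about is consistently matching the sign conventions of Lemma~\ref{l:statids} in each of the four cases, and verifying that the pairing ``proper crossing with defective noncrossing'' in (\ref{eq:yid}) is aligned with the correct signs in (\ref{eq:crossid}) and (\ref{eq:incrossid}). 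Once this bookkeeping is done, the proof amounts to four lines of arithmetic.
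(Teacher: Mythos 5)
Your proposal is correct and takes essentially the same route as the paper: substitute the four identities of Lemma~\ref{l:statids} into the difference $\Delta$ and verify cancellation separately in each of the four cases for the triple $(\pi_{u_j},\pi_{u_{j'}},p)$; the paper works out only the proper-noncrossing case and declares the others similar, whereas you carry out all four, and your arithmetic in each case is correct. The one small thing you omit, which the paper handles first, is the trivial case $W=\zeta(W)$, where Lemma~\ref{l:statids} does not apply but (\ref{eq:adamidentity}) holds automatically since both sides coincide.
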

\begin{proof}
  If $W$ is a fixed point of $\zeta$, then the result is clear.
  Suppose therefore that $\zeta(W) \neq W$, and consider the triple
  $(\pi_{u_j}, \pi_{u_{j'}}, p)$ appearing in the definition of $\zeta$.
  If this triple is a proper noncrossing, then by Lemma~\ref{l:statids}
  the left-hand side of
  (\ref{eq:adamidentity}) is
    \begin{multline*}
    \frac{\cross(\pi) + 1 + 2b}2 + \cdncross(W) + \incross(W) - b
    - \frac{\ell(y)+1}2\\
    = \frac{\cross(\pi)}2 + \cdncross(W) + \incross(W) - \frac{\ell(y)}2
    + \frac{1 + 2b}2 - b - \frac 12.
    \end{multline*}
  One shows similarly that the result holds when the triple is a
  defective noncrossing or any crossing.
  \end{proof}

Finally we can state and justify a
subtraction-free formula for
$\epsilon_q^\lambda(\wtc{s_{J_1}\ntnsp}q \cdots \wtc{s_{J_m}\ntnsp}q)$.
The proof of the following result is nearly identical to that of
\cite[Thm.\,5.4]{KLSBasesQMBIndSgn}.
We include it here, both for the convenience of the reader,
and because it relies upon
Theorem~\ref{t:qstem},
Proposition~\ref{p:sigmastats},
Lemmas~\ref{l:TU} -- \ref{l:lengthdiff}
in this paper which differ from
the corresponding (weaker) results
\cite[Thm.\,3.7]{KLSBasesQMBIndSgn},
\cite[Prop.\,4.1]{KLSBasesQMBIndSgn},
\cite[Lem.\,5.1 -- 5.2]{KLSBasesQMBIndSgn},
in that paper,
and upon Corollary~\ref{c:adamidentity}.

\begin{thm}\label{t:qepsilon}
Let $G = G_{J_1} \circ \cdots \circ G_{J_m}$.
Then for $\lambda \vdash n$ 
we have
\begin{equation}\label{eq:epsilonmain}
  \epsilon_q^\lambda(\wtc{s_{J_1}\ntnsp}q \cdots \wtc{s_{J_m}\ntnsp}q)  
  =
\sum_\pi \qp{\cross(\pi)}2 \sum_W q^{\incross(W)},
\end{equation}
where the sums are over path families $\pi$ of type $e$ which cover $G$,
and 
column-strict $\pi$-tableaux $W$
of 
shape $\lambda^\tr$.
\end{thm}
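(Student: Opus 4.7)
The plan is to reduce the left-hand side of (\ref{eq:epsilonmain}) to a sign-reversing involution argument using the machinery developed earlier in the paper. First, Theorem~\ref{t:qstem} rewrites the left-hand side as $\sigma_B(\imm{\epsilon_q^\lambda}(x))$, and then (\ref{eq:immuslambdamin}) together with Proposition~\ref{p:sigmastats} produces
\begin{equation*}
  \epsilon_q^\lambda(\wtc{s_{J_1}\ntnsp}q \cdots \wtc{s_{J_m}\ntnsp}q)
  = \sum_{u \in \slambdamin}\, \sum_{y \in \slambda}\, \sum_\pi (-1)^{\ell(y)}\, \qm{\ell(y)}2\, \qp{\cross(\pi)}2\, q^{\incross(U(\pi,u,yu))},
\end{equation*}
where the innermost sum is over path families $\pi$ of type $u^{-1}yu$ that cover $G$.

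I would then group the outer sum by ordered set partitions $I$ of $[n]$ of type $\lambda$, via the bijection $I \mapsto u(I)$ from such partitions to $\slambdamin$. For each $I$, the bijection $\delta_I\colon \mathcal U_I \to \mathcal T_I$ and Lemma~\ref{l:TU} reexpress the inner sum in terms of $G$-tableaux $W \in \mathcal T_I$ of shape $\lambda^\tr$, using $\incross(U) = \incross(W) + \cdncross(W)$. The involution $\zeta_I$ of (\ref{eq:zeta}) is then designed to pair each non-column-strict $W$ with a tableau $\zeta(W)$ whose associated $\hat y = ys$ (Lemma~\ref{l:lengthdiff}) satisfies $\ell(\hat y) = \ell(y) \pm 1$ (Lemma~\ref{l:statids}, identity (\ref{eq:yid})), so that the sign $(-1)^{\ell(y)}$ flips. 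Simultaneously, Corollary~\ref{c:adamidentity} guarantees that the exponent $\cross(\pi)/2 - \ell(y)/2 + \incross(W) + \cdncross(W)$ is preserved under $\zeta$, so the $q$-weights agree, and the paired contributions cancel.

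What survives are the fixed points of $\zeta$, namely the column-strict tableaux. For such a $W$, column-strictness forces $y = e$ (so the sign contribution is $+1$ and $\qm{\ell(y)}2 = 1$) and the path family $\pi$ to have type $e$ (since in each column the sinks must match the sources, both listed in increasing order). Assembling these surviving contributions across all $I$ converts the double sum over $I$ and column-strict $W \in \mathcal T_I$ into a single sum over column-strict $\pi$-tableaux of shape $\lambda^\tr$ with $\pi$ of type $e$, yielding the right-hand side of (\ref{eq:epsilonmain}). The chief technical hurdle is the bookkeeping in Lemma~\ref{l:statids}: one must verify that $\zeta$ is genuinely an involution on $\mathcal T_I$ and that at each fixed point the statistic $\cdncross(W)$ contributes nothing beyond $\incross(W)$, so that the fixed-point weight simplifies to the $q^{\incross(W)}$ claimed in the theorem.
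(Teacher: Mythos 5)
Your proposal is correct and follows the same approach as the paper's proof: reduce via Theorem~\ref{t:qstem} and Proposition~\ref{p:sigmastats} to a sum over $\mathcal U_I$, transfer to $\mathcal T_I$ via $\delta_I$ and Lemma~\ref{l:TU}, cancel non-column-strict tableaux using the involution $\zeta$ with the sign flip from Lemma~\ref{l:statids} (identity~(\ref{eq:yid})) and the weight invariance from Corollary~\ref{c:adamidentity}, and then identify fixed points as column-strict tableaux of type $e$. The only cosmetic difference is that you invoke (\ref{eq:immuslambdamin}) directly while the paper expands through (\ref{eq:immepsilon})--(\ref{eq:lambepsilon}), but these are equivalent.
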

\begin{proof}
  Let $B$ be the path matrix of $G$.
  By Theorem~\ref{t:qstem}
  and
  Equations (\ref{eq:immepsilon}) -- (\ref{eq:lambepsilon}),
  the left-hand side of (\ref{eq:epsilonmain}) is
  \begin{equation}\label{eq:epsilonsecond}
    \begin{aligned}
      \sigma_B(\imm{\epsilon_q^\lambda}(x)) &=
      \sum_I
      \sigma_B(\qdet(x_{I_1,I_1}) \cdots \qdet(x_{I_r,I_r}))\\
      &=
      \sum_I
      \sum_{\smash{y \in \slambda}} (-1)^{\ell(y)}\qiey
      \sigma_B (x^{u(I),yu(I)}),
    \end{aligned}
  \end{equation}
    where the first two sums are over ordered set partitions
    $I = (I_1,\dotsc,I_r)$ of $[n]$ of type $\lambda$.
    Fixing one such partition $I$ and writing $u = u(I)$, we may use
    Proposition~\ref{p:sigmastats} and Lemma~\ref{l:TU} to
    express the sum over $y \in \slambda$ as
\begin{equation}\label{eq:ypiinvuupi}
  \sum_{y \in \slambda}
  \sum_\pi 
  (-1)^{\ell(y)} \qiey
\qp{\cross(\pi)}2
q^{\incross(U(\pi,u,yu))}
=
\sum_{y \in \slambda}
  \sum_\pi 
(-1)^{\ell(y)} \qiey
\qp{\cross(\pi)}2 q^{\incross(W)+\cdncross(W)},
\end{equation}
where
the inner
sums are over
path families $\pi$ of type
$u^{-1}yu$
which cover $G$,
and where $W = \delta_I(U(\pi,u,yu))$.
As $y$ and $\pi$ vary in the above sums,
$U(\pi,u,yu)$
varies over all tableaux in $\mathcal U_I$,
and
$W$ varies over all tableaux in $\mathcal T_I$. 

Now consider a tableau $W \in \mathcal T_I$ which satisfies $\zeta(W) \neq W$.
Let tableaux $W$ and $\zeta(W)$ contain path families
$\pi$ of type $u^{-1}yu$ and $\hat \pi$ of type $u^{-1}\hat yu$, respectively.
Then the two terms
on the right-hand side of
(\ref{eq:ypiinvuupi}) corresponding to
$\zeta(W)$ and $W$ sum to
\begin{equation*}
  (-1)^{\ell(\hat y)}
  \qumongous{\cross(\hat \pi)}{\incross(\zeta(W))}{\cdncross(\zeta(W))}{\ell(\hat y)}
  + (-1)^{\ell(y)}
  \qumongous{\cross(\pi)}{\incross(W)}{\cdncross(W)}{\ell(y)}.
  \end{equation*}
By Lemma~\ref{l:lengthdiff}
and Corollary~\ref{c:adamidentity}
this is $0$.
Thus it suffices to sum the right-hand side of (\ref{eq:ypiinvuupi})
over only the pairs $(y,\pi)$ corresponding
to tableaux $W$ satisfying $W = \zeta(W)$.
By the definition of $\zeta$, each such tableau $W$
is column-strict and therefore satisfies $\cdncross(W) = 0$.
Since all tableaux in $\mathcal T_I$ are also column-closed,
each such tableau $W$ must have type $e$.  Thus we have
we have $u^{-1}yu = e$, i.e., $y=e$.
It follows that the
right-hand side of (\ref{eq:ypiinvuupi})
and the third sum in (\ref{eq:epsilonsecond}) are equal to
\begin{equation*}
 \qp{\cross(\pi)}2 \sum_W  q^{\incross(W)},
\end{equation*}
where the sum is over all tableau $W$ in $\mathcal T_I$ which
are column-strict of type $e$.
Since all tableaux in $\mathcal T_I$ have shape $\lambda^\tr$,
the three expressions in (\ref{eq:epsilonsecond})
are equal to the right-hand side of (\ref{eq:epsilonmain}).
\end{proof}

To illustrate the theorem, we compute 
$\epsilon_q^{211}(\wtc{s_{[1,2]}\ntnsp}q \wtc{s_{[2,4]}\ntnsp}q \wtc{s_{[1,2]}\ntnsp}q)$ 
using the wiring diagram
\begin{equation}
G = G_{[1,2]}\circ G_{[2,4]} \circ G_{[1,2]} = 
\begin{tikzpicture}[scale=.5,baseline=-10]
  \draw (0,1) -- (1,1) -- (1.5,0) -- (2,1) -- (3,1);
  \draw (0,0) -- (1,0) -- (1.5,0) -- (2,0) -- (3,0);
  \draw (0,-1) -- (.5,-1.5) --  (1,-1) -- (1.5,0) -- (2,-1) -- (2.5,-1.5) -- (3,-1);
  \draw (0,-2) -- (.5,-1.5) -- (1,-2) -- (2,-2) -- (2.5,-1.5) -- (3,-2);
\end{tikzpicture}.
\end{equation}

There are two path families of type $e$ which cover $G$, and four 
column-strict $G$-tableaux of shape $211^\tr = 31$ for each:
\begin{equation*}\label{eq:Gtableauxpi}
\begin{tikzpicture}[scale=.5,baseline=-10]
  \draw[dashed, ultra thick] (0,1) -- (1,1) -- (1.5,0) -- (2,1) -- (3,1);
  \draw[dotted, thick] (0,0) -- (1,0) -- (1.5,0) -- (2,0) -- (3,0);
  \draw[dashed] (0,-1) -- (.5,-1.5) --  (1,-1) -- (1.5,0) -- (2,-1) -- (2.5,-1.5) -- (3,-1);
  \draw[-,thick] (0,-2) -- (.5,-1.5) -- (1,-2) -- (2,-2) -- (2.5,-1.5) -- (3,-2);
  \node at (-.5,1) {$\pi_4$};
  \node at (-.5,0) {$\pi_3$};
  \node at (-.5,-1) {$\pi_2$};
  \node at (-.5,-2) {$\pi_1$};
\end{tikzpicture}
\;, \quad
U_\pi^{(1)} = \tableau[scY]{\pi_3 | \pi_1,\pi_2,\pi_4} \;,\ 
U_\pi^{(2)} = \tableau[scY]{\pi_3 | \pi_1,\pi_4,\pi_2}\;,\ 
U_\pi^{(3)} = \tableau[scY]{\pi_4 | \pi_1,\pi_2,\pi_3}\;,\ 
U_\pi^{(4)} = \tableau[scY]{\pi_4 | \pi_1,\pi_3,\pi_2}\;; 
\end{equation*}
\begin{equation*}
    \begin{tikzpicture}[scale=.5,baseline=-10]
    \draw[dashed, ultra thick] (0,1) -- (1,1) -- (1.5,0) -- (2,1) -- (3,1);
    \draw[dotted, thick] (0,0) -- (1,0) -- (1.5,0) -- (2,0) -- (3,0);
    \draw[dashed] (0,-1) -- (.5,-1.5) -- (1,-2) -- (2,-2) -- (2.5,-1.5) -- (3,-1);
    \draw[-, thick] (0,-2) -- (.5,-1.5)  --  (1,-1) -- (1.5,0) -- (2,-1) -- (2.5,-1.5) -- (3,-2);
    \node at (-.5,1) {$\rho_4$};
    \node at (-.5,0) {$\rho_3$};
    \node at (-.5,-1) {$\rho_2$};
    \node at (-.5,-2) {$\rho_1$};
    \end{tikzpicture}\;, \quad
    U_\rho^{(1)} = \tableau[scY]{\rho_3 | \rho_2,\rho_1,\rho_4}\;,\ 
    U_\rho^{(2)} = \tableau[scY]{\rho_3 | \rho_2,\rho_4,\rho_1}\;,\ 
    U_\rho^{(3)} = \tableau[scY]{\rho_4 | \rho_2,\rho_1,\rho_3}\;,\ 
    U_\rho^{(4)} = \tableau[scY]{\rho_4 | \rho_2,\rho_3,\rho_1}\;.
    \end{equation*}
Since the path family $\pi$ has no crossings, 
we have $\textsc{c}(U_\pi^{(i)}) = 0$ for all $i$, and each
tableau $U_\pi^{(i)}$ therefore contributes 
$q^{\textsc{invnc}(U_\pi^{(i)})}$. 
We have one noncrossing for each of the pairs $(\pi_2,\pi_3)$, $(\pi_2,\pi_4)$ and $(\pi_3,\pi_4)$ 
and two for the pair $(\pi_1,\pi_2)$. Counting the noncrossings only for pairs where the path 
which intersects the other from above appears in a column left of the other, for instance $\pi_2$ and $\pi_3$ 
in $U_\pi^{(1)}$, we find the contributions from $U_\pi^{(1)},\dots,U_\pi^{(4)}$ are
$q, q^2, q^2, q^3$, respectively. 
Since the path family $\rho$ has two crossings, 
the tableaux for the path family $\rho$ each have two crossings, 
and one noncrossing for 
each of the pairs $(\rho_1,\rho_3)$, $(\rho_1,\rho_4)$ and $(\rho_3,\rho_4)$. 
Adding the contributions together we find the contributions for 
$U_\rho^{(1)},\dots,U_\rho^{(4)}$ are $q^1q^{2/2} = q^2$, $q^2q^{2/2} = q^3$, 
$q^2q^{2/2} = q^3$ and $q^3q^{2/2}= q^4$ respectively. 
Hence we have 
$\epsilon_q^{211}(\wtc{s_{[1,2]}\ntnsp}q \wtc{s_{[2,4]}\ntnsp}q \wtc{s_{[1,2]}\ntnsp}q)
= q + 3q^2 + 3q^3 + q^4$.

Theorem~\ref{t:qepsilon} allows one to
combinatorially interpret evaluations of $\epsilon_q^\lambda$ at
(multiples of) certain elements $\wtc wq$ 
of the Kazhdan-Lusztig basis of $\hnq$.
In particular, for some elements $\wtc wq$ there exists a polynomial $g(q)$
such that we have
  \begin{equation}\label{eq:klfactor}
    g(q) \wtc wq = \wtc{s_{J_1}\ntnsp}q \cdots \wtc{s_{J_m}\ntnsp}q
  \end{equation}
  for some sequence $s_{J_1},\dotsc,s_{J_m}$ of reversals.
  Such permutations include all \pavoiding permutations,
  all of $\mfs 4$ (even $4231$ and $3412$),
  all of $\mfs 5$ except $45312$,
  and all $321$-{\em hexagon-avoiding} permutations.
  (See \cite{BWHex}.)
  
\begin{cor}\label{c:klfactor}
  Suppose that $\wtc wq$ satisfies
  a factorization of the form (\ref{eq:klfactor})
  and define $G = G_{J_1} \circ \cdots \circ G_{J_m}$.
  Then we have
  \begin{equation}\label{eq:klfactoreval}
    \epsilon_q^\lambda(\wtc wq) = \frac1{g(q)}\sum_U q^{\incross(U)+\cross(U)/2},
\end{equation}
where the sum is over all column-strict $G$-tableaux 
of type $e$ and shape $\lambda^\tr$.
\end{cor}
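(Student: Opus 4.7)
The plan is essentially a one-line deduction from Theorem~\ref{t:qepsilon}, together with careful bookkeeping between the indexing conventions used in the theorem and in the corollary. First I would apply the $\zqq$-linear functional $\epsilon_q^\lambda$ to both sides of the assumed factorization (\ref{eq:klfactor}) to obtain
\[
g(q)\,\epsilon_q^\lambda(\wtc wq) \;=\; \epsilon_q^\lambda(\wtc{s_{J_1}\ntnsp}q \cdots \wtc{s_{J_m}\ntnsp}q).
\]
Theorem~\ref{t:qepsilon}, applied to the star network $G = G_{J_1} \circ \cdots \circ G_{J_m}$, then rewrites the right-hand side as
\[
\sum_\pi \qp{\cross(\pi)}2 \sum_W q^{\incross(W)},
\]
where $\pi$ ranges over path families of type $e$ covering $G$ and, for each such $\pi$, $W$ ranges over column-strict $\pi$-tableaux of shape $\lambda^\tr$.

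Next I would collapse this double sum into a single sum. The data of a pair $(\pi,W)$ is equivalent to the data of the single $G$-tableau $U$ obtained by recording the cells of $W$ together with the paths occupying them: its underlying path family (read off by ignoring the tableau structure) is $\pi$, which has type $e$, and the column-strictness of $W$ is exactly the column-strictness of $U$. Conversely, any column-strict $G$-tableau $U$ of type $e$ and shape $\lambda^\tr$ determines a unique such pair. Extending the statistic $\cross$ to tableaux by $\cross(U) \defeq \cross(\pi)$, where $\pi$ is the underlying path family, the double sum becomes
\[
\sum_U q^{\incross(U) + \cross(U)/2},
\]
indexed by column-strict $G$-tableaux $U$ of type $e$ and shape $\lambda^\tr$. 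Dividing both sides by $g(q)$ yields (\ref{eq:klfactoreval}).

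There is really no serious obstacle in this argument: the only thing to check is the innocuous indexing equivalence described above, and the fact that $g(q) \in \zqq$ (or $\mathbb Z[q]$) can be pulled through the $\zqq$-linear functional $\epsilon_q^\lambda$. Consequently the proof is barely more than a restatement of Theorem~\ref{t:qepsilon} for the product $\wtc{s_{J_1}\ntnsp}q \cdots \wtc{s_{J_m}\ntnsp}q$, combined with the hypothesized factorization; the substantive content of the corollary lies entirely in the existence of interesting Kazhdan--Lusztig elements $\wtc wq$ admitting such a factorization (e.g.\ those highlighted in the discussion preceding the statement, via \cite{BWHex}).
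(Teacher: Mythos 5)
Your argument is correct and is exactly the (implicit) justification the paper intends: the corollary is stated without a separate proof because it is an immediate consequence of Theorem~\ref{t:qepsilon}, linearity of $\epsilon_q^\lambda$, and the bookkeeping identification of pairs $(\pi,W)$ with column-strict $G$-tableaux $U$ of type $e$, writing $\cross(U)\defeq\cross(\pi)$. Nothing to add.
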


It would be interesting to know for which Kazhdan-Lusztig basis
elements we have the above factorization~(\cite[Quest.\,4.5]{SkanNNDCB}).

Recall from Equations~(\ref{eq:cprodq}) -- (\ref{eq:cnoprodq})
that Haiman's result~\cite[Lem.\,1.1]{HaimanHecke} that we have
\begin{equation*}
  \chi_q^\lambda(\wtc wq) \in \mathbb N[q]
  \end{equation*}
for all irreducible characters $\chi_q^\lambda$ and all $w \in \sn$
implies that we also have
\begin{equation}\label{eq:prodchi}
  \chi_q^\lambda(\wtc{s_{J_1}\ntnsp}q \cdots \wtc{s_{J_m}\ntnsp}q) \in \mathbb N[q]
\end{equation}
for all irreducible characters $\chi_q^\lambda$ and
all sequences $(s_{J_1}, \dotsc, s_{J_m})$ of reversals.
It would therefore be interesting to extend Theorem~\ref{t:qepsilon}
to irreducible characters.

\begin{prob}\label{p:qchi}
  Find a combinatorial interpretation of the polynomial
  (\ref{eq:prodchi})
  which holds for all irreducible charcters $\chi_q^\lambda$
  and all sequences $(s_{J_1}, \dotsc, s_{J_m})$ of reversals.
\end{prob}



\begin{thebibliography}{10}

\bibitem{BBDFaisceaux}
{\sc A.~A. Be\u{\i}linson, J.~Bernstein, and P.~Deligne}.
\newblock Faisceaux pervers.
\newblock In {\em Analysis and topology on singular spaces, {I} ({L}uminy,
  1981)\/}, vol. 100 of {\em Ast\'{e}risque\/}. Soc. Math. France, Paris
  (1982), pp. 5--171.

\bibitem{BWHex}
{\sc S.~C. Billey and G.~Warrington}.
\newblock {Kazhdan-Lusztig} polynomials for $321$-hexagon-avoiding
  permutations.
\newblock {\em J. Algebraic Combin.\/}, {\bf 13}, 2 (2001) pp. 111--136.

\bibitem{BBCoxeter}
{\sc A.~Bj{\"o}rner and F.~Brenti}.
\newblock {\em Combinatorics of {C}oxeter groups\/}, vol. 231 of {\em Graduate
  Texts in Mathmatics\/}.
\newblock Springer, New York (2005).

\bibitem{BrentiCTP}
{\sc F.~Brenti}.
\newblock Combinatorics and total positivity.
\newblock {\em J.~Combin.~Theory Ser.~A\/}, {\bf 71}, 2 (1995) pp. 175--218.

\bibitem{CHSSkanEKL}
{\sc S.~Clearman, M.~Hyatt, B.~Shelton, and M.~Skandera}.
\newblock Evaluations of {H}ecke algebra traces at {K}azhdan-{L}usztig basis
  elements.
\newblock {\em Electron. J. Combin.\/}, {\bf 23}, 2 (2016) pp. Paper 2.7, 56.

\bibitem{CryerProp}
{\sc C.~W. Cryer}.
\newblock Some properties of totally positive matrices.
\newblock {\em Linear Algebra Appl.\/}, {\bf 15} (1976) pp. 1--25.

\bibitem{Deodhar90}
{\sc V.~Deodhar}.
\newblock A combinatorial setting for questions in {Kazhdan-Lusztig} theory.
\newblock {\em Geom. Dedicata\/}, {\bf 36}, 1 (1990) pp. 95--119.

\bibitem{FominTPTest}
{\sc S.~Fomin and A.~Zelevinsky}.
\newblock Total positivity: Tests and parametrizations.
\newblock {\em Math. Intelligencer\/}, {\bf 22}, 1 (2000) pp. 23--33.

\bibitem{GantKreinOsc}
{\sc F.~R. Gantmacher and M.~G. Krein}.
\newblock {\em Oscillation matrices and kernels and small vibrations of
  mechanical systems\/}.
\newblock AMS Chelsea Publishing, Providence (2002).
\newblock Edited by A. Eremenko. Translation based on the 1941 Russian
  original.

\bibitem{HaimanHecke}
{\sc M.~Haiman}.
\newblock Hecke algebra characters and immanant conjectures.
\newblock {\em J. Amer. Math. Soc.\/}, {\bf 6}, 3 (1993) pp. 569--595.

\bibitem{KLSBasesQMBIndSgn}
{\sc R.~Kaliszewski, J.~Lambright, and M.~Skandera}.
\newblock Bases of the quantum matrix bialgebra and induced sign characters of
  the {H}ecke algebra.
\newblock {\em J. Algebraic Combin.\/}, {\bf 49}, 4 (2019) pp. 475--505.

\bibitem{KMG}
{\sc S.~Karlin and G.~{McGregor}}.
\newblock Coincidence probabilities.
\newblock {\em Pacific J. Math.\/}, {\bf 9} (1959) pp. 1141--1164.

\bibitem{KLRepCH}
{\sc D.~Kazhdan and G.~Lusztig}.
\newblock {Representations of Coxeter groups and Hecke algebras}.
\newblock {\em Invent. Math.\/}, {\bf 53} (1979) pp. 165--184.

\bibitem{KSkanQGJ}
{\sc M.~Konvalinka and M.~Skandera}.
\newblock Generating functions for {H}ecke algebra characters.
\newblock {\em Canad.\ J.\ Math.\/}, {\bf 63}, 2 (2011) pp. 413--435.

\bibitem{LakSan}
{\sc V.~Lakshmibai and B.~Sandhya}.
\newblock Criterion for smoothness of {Schubert} varieties in {$SL(n)/B$}.
\newblock {\em Proc. Indian Acad. Sci. (Math Sci.)\/}, {\bf 100}, 1 (1990) pp.
  45--52.

\bibitem{LinVRep}
{\sc B.~{Lindstr\"om}}.
\newblock On the vector representations of induced matroids.
\newblock {\em Bull. London Math. Soc.\/}, {\bf 5} (1973) pp. 85--90.

\bibitem{LusztigTP}
{\sc G.~Lusztig}.
\newblock Total positivity in reductive groups.
\newblock In {\em Lie Theory and Geometry: in Honor of Bertram Kostant\/}, vol.
  123 of {\em Progress in Mathematics\/}. Birkh{\"a}user, Boston (1994), pp.
  531--568.

\bibitem{ManinQNoncommG}
{\sc Y.~I. Manin}.
\newblock {\em Quantum groups and noncommutative geometry\/}.
\newblock Universit\'e de Montr\'eal Centre de Recherches Math\'ematiques,
  Montreal, QC (1988).

\bibitem{SkanNNDCB}
{\sc M.~Skandera}.
\newblock On the dual canonical and {Kazhdan}-{Lusztig} bases and 3412,
  4231-avoiding permutations.
\newblock {\em J.\ Pure Appl.\ Algebra\/}, {\bf 212} (2008).

\bibitem{SpringerQACI}
{\sc T.~A. Springer}.
\newblock Quelques aplications de la cohomologie d'intersection.
\newblock In {\em S\'eminaire Bourbaki, Vol.\,1981/1982\/}, vol.~92 of {\em
  Ast{\'e}risque\/}. Soc. Math. France, Paris (1982), pp. 249--273.

\bibitem{StemImm}
{\sc J.~Stembridge}.
\newblock Immanants of totally positive matrices are nonnegative.
\newblock {\em Bull. London Math. Soc.\/}, {\bf 23} (1991) pp. 422--428.

\bibitem{StemConj}
{\sc J.~Stembridge}.
\newblock Some conjectures for immanants.
\newblock {\em Canad.\ J.\ Math.\/}, {\bf 44}, 5 (1992) pp. 1079--1099.

\end{thebibliography}

\end{document}